\documentclass[11pt]{article}
\usepackage{srcltx}
\usepackage{eurosym}
\usepackage{mathtools}
\usepackage{amsmath}
\usepackage{amsfonts}
\usepackage{amssymb}
\usepackage{amsthm}
\usepackage{bm}
\usepackage{graphicx}
\usepackage{mathrsfs}
\usepackage{xcolor}
\usepackage{exscale}
\usepackage{color, soul}
\usepackage{latexsym}
\usepackage{authblk}
\usepackage{comment}
\allowdisplaybreaks

\usepackage[colorlinks,plainpages=true,pdfpagelabels,hypertexnames=true,colorlinks=true,pdfstartview=FitV,linkcolor=blue,citecolor=red,urlcolor=black]{hyperref}
\PassOptionsToPackage{unicode}{hyperref}
\PassOptionsToPackage{naturalnames}{hyperref}
\usepackage{enumerate}
\usepackage[shortlabels]{enumitem}
\usepackage{bookmark}
\usepackage{wasysym}
\usepackage{esint}
\usepackage[ddmmyyyy]{datetime}
\usepackage[margin=1in]{geometry}
\makeatletter
\g@addto@macro\normalsize{%
	\setlength\abovedisplayskip{4pt}
	\setlength\belowdisplayskip{4pt}
	\setlength\abovedisplayshortskip{4pt}
	\setlength\belowdisplayshortskip{4pt}
}
\numberwithin{equation}{section}
\everymath{\displaystyle}
\usepackage[capitalize,nameinlink]{cleveref}
\crefname{section}{Section}{Sections}
\crefname{subsection}{Subsection}{Subsections}
\crefname{condition}{Condition}{Conditions}
\crefname{hypothesis}{Hypothesis}{Conditions}
\crefname{assumption}{Assumption}{Assumptions}
\crefname{lemma}{Lemma}{Lemmas}
\crefname{definition}{Definition}{Definitions}

\crefformat{equation}{\textup{#2(#1)#3}}
\crefrangeformat{equation}{\textup{#3(#1)#4--#5(#2)#6}}
\crefmultiformat{equation}{\textup{#2(#1)#3}}{ and \textup{#2(#1)#3}}
{, \textup{#2(#1)#3}}{, and \textup{#2(#1)#3}}
\crefrangemultiformat{equation}{\textup{#3(#1)#4--#5(#2)#6}}%
{ and \textup{#3(#1)#4--#5(#2)#6}}{, \textup{#3(#1)#4--#5(#2)#6}}%
{, and \textup{#3(#1)#4--#5(#2)#6}}

\Crefformat{equation}{#2Equation~\textup{(#1)}#3}
\Crefrangeformat{equation}{Equations~\textup{#3(#1)#4--#5(#2)#6}}
\Crefmultiformat{equation}{Equations~\textup{#2(#1)#3}}{ and \textup{#2(#1)#3}}
{, \textup{#2(#1)#3}}{, and \textup{#2(#1)#3}}
\Crefrangemultiformat{equation}{Equations~\textup{#3(#1)#4--#5(#2)#6}}%
{ and \textup{#3(#1)#4--#5(#2)#6}}{, \textup{#3(#1)#4--#5(#2)#6}}%
{, and \textup{#3(#1)#4--#5(#2)#6}}

\crefdefaultlabelformat{#2\textup{#1}#3}

\newtheorem{theorem} {Theorem}[section]
\newtheorem{proposition} [theorem]{Proposition}

\newtheorem{lemma}[theorem]{Lemma}

\newtheorem{counter example}[theorem]{Counter Example}
\newtheorem{remark}[theorem] {Remark}
\newtheorem{definition}[theorem] {Definition}

\newtheorem{assumption}[theorem]{Assumption}

\def\CC{{\rm \kern.24em \vrule width.02em height1.4ex depth-.05ex \kern-.26emC}}

\def\TagOnRight

\def\AA{{it I} \hskip-3pt{\tt A}}

\def\QQ{\rlap {\raise 0.4ex \hbox{$\scriptscriptstyle |$}} {\hskip -0.1em Q}}

\makeatletter
\newcommand{\vo}{\vec{o}\@ifnextchar{^}{\,}{}}
\makeatother

\def\YYint#1#2#3{{\setbox0=\hbox{$#1{#2#3}{\iint}$}
		\vcenter{\hbox{$#2#3$}}\kern-.50\wd0}}

\def\XXint#1#2#3{{\setbox0=\hbox{$#1{#2#3}{\int}$}
		\vcenter{\hbox{$#2#3$}}\kern-.50\wd0}}

\makeatletter
\def\namedlabel#1#2{\begingroup
	\def\@currentlabel{#2}%
	\label{#1}\endgroup
}
\makeatother
\makeatletter
\newcommand{\rmh}[1]{\mathpalette{\raisem@th{#1}}}
\newcommand{\raisem@th}[3]{\hspace*{-1pt}\raisebox{#1}{$#2#3$}}
\makeatother

\newcounter{desccount}

\newcommand{\descref}[2]{\hyperref[#1]{\textnormal{\textcolor{black}{}\textcolor{blue}{ #2}\textcolor{black}{}}}}

\newcommand{\dref}[2]{\hyperref[#1]{\textcolor{black}{(}\textcolor{blue}{\bf #2}\textcolor{black}{)}}}
\newcommand{\be} {\begin{eqnarray}}
	\newcommand{\ee} {\end{eqnarray}}
\newcommand{\Bea} {\begin{eqnarray*}}
	\newcommand{\Eea} {\end{eqnarray*}}

\newcommand{\e}  {\epsilon}

\newcommand{\eps} {\epsilon}

\newcounter{whitney}
\refstepcounter{whitney}

\newcounter{ineqcounter}
\refstepcounter{ineqcounter}
\makeatletter
\def\ps@pprintTitle{%
	\let\@oddhead\@empty
	\let\@evenhead\@empty
	\def\@oddfoot{}%
	\let\@evenfoot\@oddfoot}
\makeatother
\usepackage[doublespacing]{setspace}
\usepackage[titletoc,toc,page]{appendix}

\makeatletter
\newcommand{\refcheckize}[1]{%
	\expandafter\let\csname @@\string#1\endcsname#1%
	\expandafter\DeclareRobustCommand\csname relax\string#1\endcsname[1]{%
		\csname @@\string#1\endcsname{##1}\wrtusdrf{##1}}%
	\expandafter\let\expandafter#1\csname relax\string#1\endcsname
}
\makeatother

\refcheckize{\cref}
\refcheckize{\Cref}


\makeatletter
\newcommand{\mainsectionstyle}{%
	\renewcommand{\@secnumfont}{\bfseries}
	\renewcommand\section{\@startsection{section}{2}%
		\z@{.5\linespacing\@plus.7\linespacing}{-.5em}%
		{\normalfont\bfseries}}%
}
\makeatother
\usepackage{pgf,tikz}
\usetikzlibrary{arrows}
\usetikzlibrary{decorations.pathreplacing}
\usepackage{enumitem}    

\usepackage{xpatch}
\xpatchcmd{\MaketitleBox}{\hrule}{}{}{}
\xpatchcmd{\MaketitleBox}{\hrule}{}{}{}

\date{}

\usepackage{scalerel}

\newcommand{\U}{\textbf{u}}

\makeatletter

\usepackage{subcaption}
\usepackage{hyperref}
\usepackage{caption}
\usepackage{subcaption}

\usepackage[utf8]{inputenc}
\allowdisplaybreaks

\DeclareCaptionSubType*[Alph]{figure}
\title{Global Existence for a Class of Keyfitz--Kranzer Systems with Application to Thin-Film Flows}

\author[1]{Rahul Barthwal\thanks{\href{mailto:rahul.barthwal@mathematik.uni-stuttgart.de}{Corresponding author: rahul.barthwal@mathematik.uni-stuttgart.de}}}
\author[2]{Philipp  \"Offner\thanks{\href{mailto: philipp.oeffner@tu-clausthal.de}{philipp.oeffner@tu-clausthal.de}}}
\author[1]{Christian Rohde\thanks{\href{mailto:christian.rohde@mathematik.uni-stuttgart.de}{christian.rohde@mathematik.uni-stuttgart.de}}}
\affil[1]{\footnotesize Institute of Applied Analysis and Numerical Simulation, University of Stuttgart, Germany}
\affil[2]{Institute of Mathematics, Clausthal University of Technology,  Germany} 

\begin{document}

\maketitle
\begin{abstract}
We prove the existence of global weak entropy solutions for a class of non-symmetric Keyfitz– Kranzer type systems that includes lubrication models for thin-film flow. 
We identify a family of entropy/entropy-flux pairs for these first-order systems, which is, in particular, admissible for a tailored second-order approximate system. The latter is motivated by higher-order dissipation operators in thin-film flow models. By identifying an invariant region in the state space, it is possible to derive a-priori $L^\infty$-bounds for the sequence of solutions to the approximate system. Exploiting the parabolic and transport structure of the equations associated with the Riemann invariants, we then rigorously justify the vanishing-diffusion limit and establish the existence of weak entropy solutions for the Cauchy problem for the first-order systems.
\end{abstract}
{\textbf{Key words.} Keyfitz--Kranzer systems, thin-film flow, vanishing diffusion limit, entropy solutions}
\medskip \\
{\textbf{MSC codes.}  35L40 35L45 35L65 35A01  35D30

\section{Introduction}
Let $T>0$, $n\in\mathbb N$ and consider the Cauchy problem
for $\mathbf{U}:\Omega_T:=\mathbb R\times(0,T)\to\mathcal U\subset\mathbb R^n$ given by
\begin{equation}\label{general_system}
\begin{array}{rcll}
\mathbf{U}_t+{\mathbf{F}(\mathbf{U})}_x&=&0& \text{ in } \Omega_T,\\
\mathbf{U}(\cdot,0)&=&\mathbf U_0& \text{ in } \mathbb R.
\end{array}
\end{equation}
In \eqref{general_system}, $\mathbf F:\mathcal U\to\mathbb R^n$ denotes the flux function and $\mathbf U_0:\mathbb R \to \mathcal U$ is the initial datum. We assume that \eqref{general_system} is hyperbolic, i.e., the Jacobian $\mathbf D\mathbf F$ of $\mathbf F$ is diagonalizable in the state space $\mathcal U$ and has real eigenvalues. Depending on $\mathbf F$, an appropriate weak solution concept is required for \eqref{general_system}.

The notion of weak entropy solutions refers to the existence of an entropy/entropy-flux pair $(\mathcal E,\mathcal Q)$ for \eqref{general_system}, i.e., the function  ${\mathcal E} \in C^2({\mathcal U}, \mathbb R)$ is supposed to be 
strictly convex and  ${\mathcal Q} \in C^1 ({\mathcal U}, \mathbb R)$  has to be chosen such that  the compatibility condition 
\renewcommand{\U}{{\mathbf U}}
\begin{equation}\label{compatibility conditions}
(\nabla_{\U} {\mathcal E})^\top {\mathbf D} {\mathbf F} = (\nabla_\U {\mathcal Q})^\top
\end{equation}
 holds in $\mathcal U$. A weak entropy solution is a weak solution 
of \eqref{general_system} that dissipates the entropy $\mathcal E$, see Definitions \ref{weak_soln} and \ref{entropy_soln} for precise formulations.\\ 
A standard approach to construct global weak entropy solutions is to introduce a second-order approximation of the quasilinear evolution equations in \eqref{general_system}, namely,
\begin{align}\label{general_viscous}
\mathbf{U}^\epsilon_t+{\mathbf{F}(\mathbf{U}^\epsilon)}_x
=
\epsilon\big(\mathbf{B}(\mathbf{U}^\epsilon)\mathbf{U}^\epsilon_x\big)_x
\qquad \text{ in } \Omega_T.
\end{align}
Here, $\epsilon>0$ is a small parameter and $\mathbf B:\mathcal U\to\mathbb R^{n\times n}$ is a suitably chosen diffusion matrix. The existence of a weak entropy solution for \eqref{general_system} is then established by deriving suitable estimates for the viscous approximations \eqref{general_viscous} and passing to the limit as $\epsilon\to0$.\\
The system \eqref{general_viscous} is regarded as an admissible approximation of \eqref{general_system} if it fits to the entropy structure of the underlying hyperbolic system \eqref{general_system}, in the sense that the entropy/entropy-flux pairs associated with \eqref{general_system} satisfy appropriate dissipation relations for the approximation system. Indeed, this admissibility is essential to derive the compactness properties required to pass to the limit $\epsilon\to 0$ in \eqref{general_viscous}.

For many hyperbolic systems, choosing the diffusion matrix as $\mathbf{B}(\mathbf{U})=\mathbf{I}$, $\mathbf I$ being the unit matrix, has proven to be sufficient to produce an admissible approximation; see e.g. \cite{bianchini_annals, diperna_cc, heibig_1994, Lu_SIMA}. This provides sufficient a-priori estimates for the approximate system and allows to pass to the limit $\epsilon\rightarrow 0$ to yield global weak entropy solutions of the system \eqref{general_system}. However, this is not the case for general systems. In particular, for systems with more delicate entropy structure, the specific form of the diffusion matrix $\mathbf{B}$ plays a crucial role in the construction of an admissible regularization.  In such situations, one often needs a diffusion matrix specifically tailored to the structure of the system rather than the standard unit matrix. An example is the work of Chen\&Perepelitsa \cite{chen2015vanishing} on the compressible Euler equations with spherical symmetry, where the vanishing viscosity approximation is built with designed viscosity terms, 
in order to establish convergence to global weak entropy solutions. To the best of the authors' knowledge, general existence results based on genuinely nonlinear or system-adapted diffusion matrices $\mathbf{B}$ remain relatively limited in comparison with the classical theory for  $\mathbf{B(U)}=\mathbf{I}$.

In this article, we study the Cauchy problem  for \eqref{general_system} with $n=2 $ for a class of first-order conservation laws of the form
\begin{equation}\label{eq: main_system}
\begin{aligned}
u_t+{\big(u\phi(r(u, v))\big)}_x&=0,\vspace{0.2 cm}\\[1.2ex]
   v_t+{\big(v\phi(r(u, v))\big)}_x&=0,
\end{aligned}\hspace{1 cm} 
\end{equation}
with initial data given by
\begin{align}\label{initial_data}
    (u, v)(\cdot, 0)=(u_0, v_0)\in \left(L^{\infty}(\mathbb{R})\right)^2.
\end{align}
In \eqref{eq: main_system} the coupling is realized in the product function 
\begin{equation}\label{r:def}
r(u, v)=uv.
\end{equation}
Moreover, $\phi:\mathbb{R}\mapsto \mathbb{R}$ is a smooth function of $r$ satisfying the following assumption.
\begin{assumption}\label{assumption}
$\phi\in C^\infty(\mathbb{R})$ with $\phi'(r)>0$ and $\text{{\rm {meas}}}\{r: \,3\phi'(r)+2r\phi''(r)=0\}=0$.
\end{assumption}
Furthermore, the state space for the two-dimensional unknown  $ {\mathbf U} = (u, v)^\top$ is defined for some constant $m>0$ as
\begin{align}\label{state_space}
   \mathcal{U}=\mathcal{U}_m=\{(u, v)^\top\in \mathbb{R}^2|~~ u, v\geq m\}.
\end{align}
The system \eqref{eq: main_system} turns out to be hyperbolic in ${\mathcal U}_m$. 
The  results of this paper stem from the discovery that we can provide  explicit Riemann invariants and a class of entropy/entropy-flux pairs 
for the system \eqref{eq: main_system} in ${\mathcal U}_m$, see Section \ref{sec: hyperbolicity}.

Systems of the form \eqref{eq: main_system} belong to the Keyfitz--Kranzer class of hyperbolic systems \cite{keyfitz1980system}. General  $(n\times n)$-Keyfitz--Kranzer systems  provide a  widely studied class of  hyperbolic systems that are given by
\begin{align}\label{eq: KK}
u_{it}+{(u_{i}(\Phi(u_1, u_2, \ldots, u_n)))}_x=0, ~i=1, 2, \ldots, n.
\end{align}
The systems of the form \eqref{eq: KK} have been analyzed in diverse settings for various choices of the function $\Phi$. Some of the particular choices of $\Phi$ include $\Phi({\mathbf U})=\phi(|{\mathbf U}|)$ \cite{freistuhler1991rotational, freistuhler1994cauchy} for $(n\times n$)-systems and  $\Phi(u_1, u_2)=\Phi(\alpha u_1+\beta u_2)$, $\Phi(u_1, u_2)=\Phi(u_1/u_2)$ \cite{yang2012new, yang2014delta}, or  $\Phi(u_1, u_2)=\Phi(u_1u_2)$ \cite{shen2018delta} for ($2\times2$)-systems. It is interesting to see that the different choices of $\Phi$ describe quite distinct physical phenomena. For instance, the choice of $\Phi({\mathbf U})=\phi(|{\mathbf U}|)$ describes a simplified magnetohydrodynamics model \cite{MR2205154, freistuhler1991rotational} or the elastic string model \cite{keyfitz1980system}, while $\Phi({\mathbf U})={k_i u_i}/{(1+ u_1 + \cdots + u_n})$ describes $n$-component chromatography equations \cite{james1995kinetic}.\\
For the ($2\times 2$)-system with $\Phi(u_1, u_2)=u_1u_2/2$, the system \eqref{eq: KK} or equivalently the system \eqref{eq: main_system} reduces to the system derived in \cite{barthwal2023construction, conn2017simple}, which governs the first-order dynamics of a thin film flow under the influence of a perfectly soluble solute; see also \cite{barthwal2022two, pandey2025construction}. 

It is remarkable to note that many of the Keyfitz--Kranzer type systems actually belong to the Temple systems \cite{temple1983systems}, where the shock and rarefaction curves coincide and form a straight line in the phase space. For Temple class systems, the total variation of Riemann invariants can be shown to be decreasing over time, and thus, the global existence of weak solutions with any initial data with bounded variation is possible using the classical $L^1$-theory.  In particular, Serre investigated Temple systems in this regard in \cite{serre_temple}. He also established the existence of a global unique weak solution for the case when the initial data is of bounded variation  \cite{serre_temple}. The condition of bounded variation initial data was further relaxed to a broader $L^\infty$-space for ($n\times n$)-Temple systems by Heibig \cite{heibig_1994}. By employing the compensated compactness framework, he was able to prove the existence of global weak solutions for $L^\infty$-initial data. Moreover, by constructing Oleinik-type estimates, the uniqueness of solutions in $BV(\mathbb{R})$ has been obtained. 

We emphasize that systems of the form \eqref{eq: main_system} do not belong to the Temple class, even though their shock and rarefaction curves coincide. Nevertheless, many Keyfitz-Kranzer type systems outside the Temple class have been studied successfully; see, for instance, the works of Lu \cite{Lu_JFA_1, Lu_JFA_2, Lu_SIMA} and the references therein. However, analysis for the system of the form \eqref{eq: main_system} remains open, even when the physical systems, including thin film systems, belong to this class. Most of the available results for the Keyfitz--Kranzer type systems rely on the standard approximation \eqref{general_viscous} with $\mathbf{B}=\mathbf{I}$, which yields a bounded invariant region for the set of Riemann invariants of the underlying first-order system and hence global solutions of the corresponding approximate system. However, this approach cannot be applied directly to systems of the form \eqref{eq: main_system}. More precisely, the maximum principle cannot be applied directly for the Riemann invariants associated with \eqref{eq: main_system}. Moreover, the entropy/entropy-flux pairs of the hyperbolic system \eqref{eq: main_system} do not lead to an admissible approximation \eqref{general_viscous} when $\mathbf{B}=\mathbf{I}$. As a result, the global existence of smooth solutions to the approximate system cannot be established in a straightforward manner. Therefore, for systems of the form \eqref{eq: main_system}, the choice $\mathbf{B}=\mathbf{I}$ does not appear to provide a suitable approximation.

In order to overcome this difficulty, we propose a novel approximation for the system \eqref{eq: main_system}, which takes the following form.
\begin{equation}\label{viscous_regularization}
\begin{aligned}
u^\eps_t+{\big(u^\eps\phi(r(u^\eps, v^\eps))\big)}_x&=\epsilon\left(\dfrac{{(u^\eps v^\eps)}_{x}}{v^\eps} \right)_x,\\[1.5ex]
   v^\eps_t+{\big(v^\eps\phi(r(u^\eps, v^\eps))\big)}_x&=\epsilon \left(\dfrac{{(u^\eps v^\eps)}_{x}}{u^\eps} \right)_x.
\end{aligned}
\end{equation}
The approximation system \eqref{viscous_regularization} is inspired by the lubrication equations for thin-film flows. To be precise, the second-order lubrication model for a thin-film flow under the influence of an anti-surfactant takes the form \cite{barthwal2023construction, barthwal2025hyperbolic, conn2017simple}
\begin{equation}\label{eq: thin_film_system}
\begin{aligned}
    h_t+\left(\dfrac{h^2b}{2}\right)_x&=0,\\
    b_t+\left(\dfrac{hb^2}{2}\right)_x&=\dfrac{1}{\mathrm{Pe}} \left(\dfrac{{(h b)}_{x}}{h} \right)_x.
\end{aligned}
\end{equation}
In \eqref{eq: thin_film_system}, the unknown   $h$ denotes the film thickness and $b$ stands for the unknown concentration gradient of the solute particles. Moreover, $\mathrm{Pe}$ denotes the P\'eclet number, which measures the ratio of advective to diffusive transport. In particular, $\mathrm{Pe}\ll 1$ corresponds to a diffusion-dominated regime, whereas $\mathrm{Pe}\gg 1$ corresponds to a convection-dominated regime.
In this spirit, identifying $1/\mathrm{Pe}=\epsilon$, the parameter $\epsilon$ in \eqref{viscous_regularization} can be interpreted as a physical diffusion coefficient. However, we note that the diffusion term in the first equation of \eqref{viscous_regularization} is an artificial parameter, which allows us to pass to the limit $\epsilon\rightarrow 0$ in \eqref{viscous_regularization}. 

We explicitly prove that the found entropies for the hyperbolic system \eqref{eq: main_system} are dissipated by the solutions of the approximate system \eqref{viscous_regularization}. 
Moreover, by utilizing a maximum principle on the Riemann invariants, we prove the existence of a bounded invariant region for the approximate system \eqref{viscous_regularization}, which induces global $L^\infty$-bounds for the solutions of \eqref{viscous_regularization}. 

The remainder of the article is structured as follows. In Section \ref{sec: hyperbolicity}, we discuss the basic properties of the system \eqref{eq: main_system}, identify the class of entropy/entropy-flux pairs, and explain how one can achieve local
wellposedness for the system \eqref{eq: main_system}.
Next, in Section \ref{sec: viscous}, we highlight the problems arising from using a classical diagonal diffusion matrix. In particular, the lack of the maximum principle avoids to deduct the existence of global solutions for the approximate system. To overcome this issue, we propose our novel approximation \eqref{viscous_regularization}  and deduce several a-priori bounds to prove the existence of global smooth solutions for the approximation system \eqref{viscous_regularization}. Finally, in Section \ref{sec: vanishing}, we utilize the a-priori bounds to ensure the existence of global weak entropy solutions
for \eqref{eq: main_system}. Concluding remarks and future outlook are provided in Section \ref{sec: conclusions}. 
\subsection*{Notations}\label{Notation}
Throughout the article, we use the following standard notation. We denote by $C^{k}(D)$ the space of functions on a domain $D$ whose derivatives up to order $k$ are continuous, where $k \in \mathbb{N}_0$. 
Further, for $p \in [1,\infty) \cup \{\infty\}$, we denote the Lebesgue space on $D$ as $L^p(D)$ and the $L^p(D)$-norm as ${\lVert \cdot \rVert}_{L^p(D)}$.\\
We denote by $H^k(D)$ the Sobolev space of order $k$ consisting of 
square-integrable functions whose weak derivatives up to order $k$ are also square-integrable, that is
\[
    H^k(D)
    := \bigl\{ g \in {L}^{2}(D) : \partial^m g \in {L}^{2}(D)
        \text{ for all multi-indices}~ m \text{ with } |m| \le k \bigr\},
\]
equipped with the norm
\[
    {\lVert g \rVert}_{H^k(D)}^2
    := \sum_{|m| \le k} {\lVert \partial^m g \rVert}_{{L}^{2}(D)}^2,
\]
where $\partial^m g$ denotes the weak derivative of $g$ of order $m$. \\
Finally, for a Banach space $X$ and $T>0$, we denote by $L^p(0, T;X)$, the Bochner space of measurable functions $g:(0,T)\to X$ with an induced norm
\[
    {\lVert g \rVert}_{L^p(0,T;X)}
    = \biggl( \int_0^T {\lVert u(t) \rVert}_{X}^p \,\mathrm{d}t \biggr)^{1/p},
\]
for $p \in [1,\infty)$, while for $p = \infty$
\[
{\lVert g \rVert}_{L^{\infty}(0,T;X)}
    = \sup_{t\in[0, T]} {\lVert g(t) \rVert}_{X}.
\]
\section{Hyperbolicity, entropy/entropy-flux pairs and local wellposedness}\label{sec: hyperbolicity}
In order to study the mathematical structure of system \eqref{eq: main_system}, we first show that it is hyperbolic. Assuming sufficient smoothness of the solutions, system \eqref{eq: main_system} can be expressed  for $\U= (u,v)^\top$ in the non-conservative form 
\begin{align}\label{eq: compact_form}
    \mathbf{U}_t+\mathbf{DF(U)} \U_x=0,
\end{align}
with  
\[
\mathbf{DF(U)}=\begin{pmatrix}
    \phi(r)+r\phi'(r) & u^2 \phi'(r)\\
    v^2 \phi'(r)& \phi(r)+r\phi'(r)
\end{pmatrix}.\]
being the Jacobian of the flux 
\[
\mathbf{F(U)}=\begin{pmatrix}
   u \phi(r)\\
   v \phi(r)
\end{pmatrix}
.\]
Note that we used $r = r(u,v)$ from \eqref{r:def}.\\
By a direct calculation, the eigensystem of the system \eqref{eq: compact_form} is given by 
\begin{align}\label{eigensystem}
\lambda_1(u, v) &= \phi(r),\quad ~\quad \quad \quad \quad \quad \mathbf{r}_1(u,v)=(-u, v)^\top , \\
\lambda_2(u, v) &= \phi(r)+2r\phi'(r), \quad\quad\mathbf{r}_2(u,v) = (u, v)^\top.
\end{align}
Noticing that $\phi'(r) > 0$, see Assumption \ref{assumption}, we have a clear ordering of the eigenvalues given by 
\[\lambda_1(u, v) < \lambda_2(u, v),~{\rm{when}}~(u, v)^\top\in \mathcal{U}_m.\] 
This implies that the system \eqref{eq: main_system} is strictly hyperbolic for all $\mathbf{U}\in \mathcal{U}_m$.\\
A straightforward computation leads us to 
\[\nabla_\mathbf{U} \lambda_1(u,v)\cdot \mathbf{r}_1(u,v) = 0, \qquad \nabla_\mathbf{U} \lambda_2(u,v)\cdot \mathbf{r}_2(u,v) = 6r\phi'(r) + 4r^2\phi''(r)\neq 0\quad \text{ in } \mathcal{U}_m\]
for functions $\phi$ satisfying Assumption \ref{assumption}.
This implies that the associated characteristic field of $\lambda_1$ is always linearly degenerate while that of $\lambda_2$ is genuinely nonlinear in  $\mathcal{U}_m$.

We  proceed to present explicit Riemann invariants for the system \eqref{eq: main_system} 
which pave the way to find entropy/entropy-flux pairs. 
In fact, we construct an entire family of entropy/entropy-flux pairs $(\mathcal{E}, \mathcal{Q})$ for the system \eqref{eq: main_system}. \\
First, by definition, Riemann invariants $w_i= w_i(u,v)$ are parallel to the left eigenvectors such that for $i\in \{1, 2\}$
\[\nabla_{\mathbf{U}} w_i(u,v)\cdot \mathbf{r}_i(u,v)=0.\]
Therefore, we  observe  that the Riemann invariants for the system \eqref{eq: main_system} are given by
\begin{align}\label{Riemann_invariants}
   w_1:=r(u,v)=uv, \qquad w_2:\xi(u,v):=\dfrac{u}{v}.
\end{align}
\begin{proposition}[Entropy/entropy-flux pairs for \eqref{eq: main_system}]\label{entropy_theorem}
Let  $\phi$ be a function satisfying Assumption \ref{assumption}. 
Consider functions $\Psi$ and $\Theta$ in  $C^1(0,\infty)$ such that 
\begin{align}\label{entropy_conditions}
\Psi''(r)>0, ~\Psi'(r)<0,~2r\Psi''(r)+\Psi'(r)>0, ~4\xi^2\Theta''(\xi)+4\xi\Theta'(\xi)-\Theta(\xi)\geq 0  
\end{align}
holds.\\
Then, the pair $(\mathcal{E}, \mathcal{Q})$ defined by
\begin{equation}\label{eq: entropyclass}
\begin{array}{rcl}
 \mathcal{E}(u, v)&=& \Psi(r)+\sqrt{r}\Theta\left(\xi\right) - \Psi(m^2)-m \Theta\left(1\right),\\[1ex]
 \mathcal{Q}(u, v)&=&\left(\displaystyle\int_{m^2}^{r} \Psi'(s)(\phi(s)+2s\phi'(s))\, ds\right)+\phi(r)\sqrt{r}\Theta(\xi)
\end{array}
\end{equation}
is an entropy/entropy-flux pair for the system \eqref{eq: main_system} in $\mathcal{U}_m$.
\end{proposition}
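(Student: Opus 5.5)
The proof has two parts: the compatibility condition \eqref{compatibility conditions}, and strict convexity of $\mathcal E$ on $\mathcal U_m$. We work throughout in the Riemann invariant coordinates $(r,\xi)=(uv,u/v)$ from \eqref{Riemann_invariants}. The map $(u,v)\mapsto(r,\xi)$ is a diffeomorphism of $\{u,v>0\}$, with $u=\sqrt{r\xi}$ and $v=\sqrt{r/\xi}$. The additive constants in $\mathcal E$ and the lower integration limit in $\mathcal Q$ only normalize the pair, so they play no role.

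For compatibility, recall that $\nabla w_i\cdot\mathbf r_i=0$ makes $\nabla r$ a left eigenvector for $\lambda_2$ and $\nabla\xi$ a left eigenvector for $\lambda_1$. Concretely,
\[
(\nabla r)^\top\mathbf{DF}=\lambda_2(\nabla r)^\top,\qquad (\nabla\xi)^\top\mathbf{DF}=\lambda_1(\nabla\xi)^\top .
\]
Both identities can be checked directly from the Jacobian: $(v,u)\mathbf{DF}=(\phi+2r\phi')(v,u)$ and $(1/v,-u/v^2)\mathbf{DF}=\phi\,(1/v,-u/v^2)$.

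Write $\mathcal E=E(r,\xi)$ and $\mathcal Q=Q(r,\xi)$. Since $\nabla\mathcal E=E_r\nabla r+E_\xi\nabla\xi$, condition \eqref{compatibility conditions} is equivalent to the decoupled pair
\[
Q_r=\lambda_2 E_r,\qquad Q_\xi=\lambda_1 E_\xi,
\]
where $\lambda_1=\phi(r)$ and $\lambda_2=\phi(r)+2r\phi'(r)$ depend only on $r$.

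We check this pair with $E=\Psi(r)+\sqrt r\,\Theta(\xi)$.
\begin{itemize}
\item For the $\xi$-derivative, $Q_\xi=\phi\sqrt r\,\Theta'$ and $\lambda_1E_\xi=\phi\sqrt r\,\Theta'$, so the two sides agree.
\item For the $r$-derivative, $Q_r=\Psi'\lambda_2+\big(\phi'\sqrt r+\phi/(2\sqrt r)\big)\Theta$, while $\lambda_2E_r=\lambda_2\Psi'+\lambda_2\Theta/(2\sqrt r)$.
\end{itemize}
In the $r$-derivative, the remaining $\Theta$-terms are $(2r\phi'+\phi)\Theta/(2\sqrt r)$ on both sides, so they also agree. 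This settles compatibility. It is the easy part.

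The main obstacle is strict convexity of $\mathcal E$ in the variables $(u,v)$. The plan is to compute the Hessian separately for the two summands.

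First, for $\Psi(uv)$ the Hessian is
\[
\begin{pmatrix}\Psi''v^2&\Psi''r+\Psi'\\ \Psi''r+\Psi'&\Psi''u^2\end{pmatrix}.
\]
Its diagonal entries are positive since $\Psi''>0$. Its determinant is $r^2\Psi''^2-(r\Psi''+\Psi')^2=-\Psi'(2r\Psi''+\Psi')$, which is strictly positive by $\Psi'<0$ and $2r\Psi''+\Psi'>0$. So this summand is strictly convex.

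Second, $\sqrt r\,\Theta(\xi)=v\,\Theta(u/v)$ is exactly the perspective function of $g(s)=\sqrt s\,\Theta(s)$, evaluated at $s=\xi=u/v$ with scaling variable $v$. Its Hessian is the positive semidefinite rank-one matrix
\[
\frac{g''(\xi)}{v}\begin{pmatrix}1\\-\xi\end{pmatrix}\begin{pmatrix}1&-\xi\end{pmatrix}.
\]
Computing,
\[
g''=\tfrac14 \xi^{-3/2}\big(4\xi^2\Theta''+4\xi\Theta'-\Theta\big),
\]
which is $\ge0$ precisely by the last condition in \eqref{entropy_conditions}. So this summand is convex.

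The sum of a strictly convex function and a convex function is strictly convex, so $\mathcal E$ is strictly convex on $\mathcal U_m$.

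Two remarks on regularity. The hypothesis says $\Psi,\Theta\in C^1$, but the Hessian conditions require $C^2$, so I would tacitly read the regularity as $C^2$. Then $\mathcal E\in C^2$ and $\mathcal Q\in C^1$, as required, and $(\mathcal E,\mathcal Q)$ is an entropy/entropy-flux pair.
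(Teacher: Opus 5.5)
Your proof is correct, but your convexity argument takes a different route from the paper's. The paper works entirely in Riemann-invariant coordinates. It obtains $\Psi(r)+\sqrt r\,\Theta(\xi)+K$ as the general solution of the compatibility equation $2r\phi'\mathcal E_{r\xi}-\phi'\mathcal E_\xi=0$. This also gives the converse, that every entropy has this form. The paper only states the flux formula, whereas you actually verify $Q_r=\lambda_2E_r$ and $Q_\xi=\lambda_1E_\xi$.

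For convexity, the paper tests the Hessian only on the right eigenvectors, getting $\mathbf r_1^\top\nabla^2\mathcal E\,\mathbf r_1=\sqrt r\,(4\xi^2\Theta''+4\xi\Theta'-\Theta)-2r\Psi'$ and $\mathbf r_2^\top\nabla^2\mathcal E\,\mathbf r_2=2r(2r\Psi''+\Psi')$. That these two quantities suffice relies on a cited fact: $\nabla^2\mathcal E\,\mathbf{DF}$ is symmetric, so $\mathbf r_1^\top\nabla^2\mathcal E\,\mathbf r_2=0$ whenever $\lambda_1\neq\lambda_2$. Hence the paper's route uses strict hyperbolicity, i.e.\ $\phi'>0$.

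Your summand-wise computation in $(u,v)$ is self-contained. For $\Psi(uv)$ you check positive diagonal entries and determinant $-\Psi'(2r\Psi''+\Psi')>0$. For the second term you use the rank-one perspective Hessian $(g''/v)(1,-\xi)^\top(1,-\xi)$. This needs no use of Assumption~\ref{assumption}. It also shows that the $\Theta$-part is degenerate exactly along $\mathbf r_2=(u,v)^\top$, so strict convexity comes from $\Psi$ alone.

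Conversely, the eigenvector route shows what is really needed. In the $\mathbf r_1$-direction only the sum $\sqrt r\,(4\xi^2\Theta''+4\xi\Theta'-\Theta)-2r\Psi'$ must be positive, so the $\Theta$-condition could be relaxed. A summand-by-summand argument does not detect this.

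Two small points:
\begin{itemize}
\item Your identity $\sqrt r\,\Theta(\xi)=v\,\Theta(u/v)$ should read $v\,g(u/v)$ with $g(s)=\sqrt s\,\Theta(s)$; your Hessian formula is for the latter.
\item You are right to read the regularity as $C^2$ and the $\Theta$-condition as ``$\ge 0$''. The paper's proof writes ``$=0$'' at that point, which is a slip.
\end{itemize}
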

The constant term $ -\Psi(m^2)-m \Theta\left(1\right) $ in the entropy $\mathcal E$ in \eqref{eq: entropyclass} is convenient to consider later on well-defined space integrals of the entropy. 
\medskip 

{\textit{\hspace*{-0.6cm}Proof (of Proposition \ref{entropy_theorem}).}}
The Riemann invariants $r=uv$ and $\xi=u/v$ of the system \eqref{eq: main_system} satisfy the diagonal system
\begin{equation}\label{diagonal_system}
\begin{aligned}
    \begin{pmatrix}
        r \\ \xi
    \end{pmatrix}_t+ \begin{pmatrix}
        \phi(r)+2r \phi'(r) & 0 \\0 & \phi(r)
    \end{pmatrix} \begin{pmatrix}
        r \\ \xi
    \end{pmatrix}_x= \begin{pmatrix}
        0 \\ 0
    \end{pmatrix}.
    \end{aligned}
\end{equation}
We use the fact that  the mapping $(u,v) \mapsto (r,\xi)$ is invertible in ${\mathcal U}_m$ and consider, with  a slight  misuse of notation, the  entropy/entropy-flux pair 
$(\mathcal{E}, \mathcal{Q})$ as a function of the Riemann invariants. 
In view of the compatibility conditions \eqref{compatibility conditions}, any entropy/entropy-flux pair $(\mathcal{E}, \mathcal{Q}) = (\mathcal{E}, \mathcal{Q})(r, \xi)$ must satisfy
\begin{align*}
    2r\phi'(r)\mathcal{E}_{r\xi}-\phi'(r)\mathcal{E}_{\xi}=0.
\end{align*}
This implies by  Assumption \ref{assumption} that any entropy $\mathcal{E}$ should be of the form 
\begin{align}
    \mathcal{E}(r, \xi)=\Psi(r)+\sqrt{r}\Theta(\xi) + K, \quad K\in \mathbb{R},
\end{align}
along with an entropy flux satisfying
\begin{align}
     \mathcal{Q}(r, \xi)=\left(\displaystyle\int_{m^2}^r \Psi'(s)(\phi(s)+2s\phi'(s))\, ds\right)+\phi(r)\sqrt{r}\Theta(\xi).
\end{align}
In view of the diagonal system \eqref{diagonal_system}, a sufficient condition (see Section 7.4 in \cite{dafermos2005hyperbolic}) for $\mathcal{E}$ to be strictly convex for $\mathbf{U}\in \mathcal{U}_m$ is that 
\[\mathbf{r_i^\top} \nabla_{\mathbf{U}}^2 \mathcal{E}(r, \xi) \mathbf{r_i}>0 ~\text{for}~ i\in \{1, 2\},\]
where $\mathbf{r}_i$ are the eigenvectors of the system defined in \eqref{eigensystem} and $\nabla_{\mathbf{U}}^2$ is the Hessian operator w.r.t. $\mathbf{U}$. 
First, we use the Hessian identity
\begin{align}\label{hessian_identity}
 \nabla_{\mathbf{U}}^2 \mathcal{E}(r, \xi)=J^\top \nabla^2_{(r, \xi)} \mathcal{E}(r, \xi) J+\mathcal{E}_{r}\nabla_{\mathbf{U}}^2 r+\mathcal{E}_{\xi}  \nabla_{\mathbf{U}}^2 \xi, 
\end{align}
where $J$ denotes the Jacobian of $(r, \xi)$ w.r.t. $\mathbf{U}$.
A simple computation gives 
\[J\mathbf{r_1}=(0, -2\xi)^\top ~\text{and}~J\mathbf{r_2}=(2r, 0)^\top.\]
Therefore, for $i\in \{1, 2\}$, we multiply \eqref{hessian_identity} by $\mathbf{r_i^\top}$ from the left and by $\mathbf{r_i}$ from the right, and obtain 
\begin{align}\label{hessian_identity_new}
\mathbf{r_1^\top} \nabla_{\mathbf{U}}^2 \mathcal{E}(r, \xi)\mathbf{r_1}&=\sqrt{r}\left(4\xi^2\Theta''(\xi)+4\xi\Theta'(\xi)-\Theta(\xi)\right)-2r\Psi'(r),\\[1.1ex]
 \mathbf{r_2^\top}\nabla_{\mathbf{U}}^2 \mathcal{E}(r, \xi)\mathbf{r_2}&=2r(2r\Psi''(r)+\Psi'(r)).
\end{align}
Clearly, in view of the positivity of $r$ and $\xi$ in $\mathcal{U}_m$, a sufficient condition for the positivity of $\mathbf{r_1^\top} \nabla_{\mathbf{U}}^2\mathcal{E}(r, \xi)\mathbf{r_1}$ and $\mathbf{r_2^\top}\nabla_{\mathbf{U}}^2\mathcal{E}(r, \xi)\mathbf{r_2}$ is 
\[4\xi^2\Theta''(\xi)+4\xi\Theta'(\xi)-\Theta(\xi)=0,~ \Psi'(r)<0,  ~\Psi''(r)>0, ~2r\Psi''(r)+\Psi'(r)>0.\]
This concludes the proof of the theorem using $-K= \Psi(m^2)+ m\Theta\left(1\right)$.
\qed
\medskip 

In the remainder of the article, we consider special 
entropy/entropy-flux pairs. This includes the entropies
\begin{align}\label{entropy_main_convex}
    \mathcal{E}_{k, p}(u, v)=\dfrac{1}{(uv)^k}+\sqrt{uv}\left(\dfrac{u}{v}\right)^p -m^{-2k} -  m, \quad k>0, \quad p\geq \dfrac{1}{2}
\end{align}
with the associated entropy fluxes 
\begin{align}\label{flux_main_convex}
 {\mathcal Q}_{k, p}(u, v)=\displaystyle\int_{m^2}^r \left(\dfrac{-k\left(\phi(s)+2r\phi'(s)\right)}{s^{k+1}}\right)\, ds+\phi(r)\sqrt{r}\left(\dfrac{u}{v}\right)^p.
\end{align}
The pair $({\mathcal E}_{k, p}, {\mathcal Q}_{k, p})$ as defined in \eqref{entropy_main_convex}, \eqref{flux_main_convex} belongs to the general class of the entropy/entropy-flux pairs $(\mathcal{E}, \mathcal{Q})$ developed in Proposition \ref{entropy_theorem} by choosing \[
\Psi(\mathbf{U})=1/r^k \text{ and } \Theta(\mathbf{U})=\xi^p \text{ for }  k>0 \text{ and } p\geq 1/2.
\]
Moreover, the functions \eqref{entropy_main_convex} satisfy the sufficient conditions for convexity given in \eqref{entropy_conditions} and thus are strictly convex for $\mathbf{U}\in \mathcal{U}_m$. We also provide explicit calculations for the strict convexity of ${\mathcal E}_{k, p}$ in Lemma \ref{entropy_parabolic}. 

It is important to remark that one can choose different strictly convex entropies for the system \eqref{eq: main_system}. In particular, let us  choose $\Theta(\xi)=0$ and $\Psi(r)=\dfrac{1}{r^k},~k>0$. This gives the strictly convex entropy as 
\begin{align}\label{eq: r_entropy}
{\mathcal E}_k(u, v)={\mathcal E}_k(r)=\dfrac{1}{r^k} -m^{-2k}.
\end{align}
In view of the existence of entropy/entropy-flux pairs, the system \eqref{eq: main_system} is a symmetrizable hyperbolic system in the sense of Friedrichs, and thus the local well-posedness is a classical result; see e.g.\ \cite{dafermos2005hyperbolic}. Precisely, we have 
\begin{lemma}[Local wellposedness of the Cauchy problem for \eqref{eq: main_system}]
Let  initial data $\U_0$ with range in $ {\mathcal U}_m$ be given such that
\[
\U_0 - (m,m)^\top=  (u_0, v_0)^\top -  (m,m)^\top \in (H^s(\mathbb{R}))^2, \quad s > {3}/{2}.
\]
Then, there exists a  time $T^*=T^*\big(\lVert \U_0 -   (m,m)^\top \rVert_{H^s({\mathbb R})}\big) \in (0, T)$ for which the Cauchy problem for \eqref{eq: main_system} admits a unique classical solution $\bf U$ on $\Omega_{T^\ast}$.  
Moreover, the solution satisfies
\[\U-   (m,m)^\top=  (u, v)^\top -   (m,m)^\top \in C\left([0, T^*]; (H^s(\mathbb{R}))^2\right) 
\cap C^1\!\left([0, T^*]; (H^{s-1}(\mathbb{R}))^2\right).
\]
\end{lemma}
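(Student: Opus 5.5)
We reduce the statement to the classical Kato--Friedrichs theory for symmetrizable hyperbolic systems, as in Dafermos' treatment. Write $\mathbf W:=\U-(m,m)^\top$, so that $\mathbf W$ satisfies the quasilinear system
\[
\mathbf W_t+\mathbf{DF}\big(\mathbf W+(m,m)^\top\big)\mathbf W_x=0,\qquad \mathbf W(\cdot,0)=\U_0-(m,m)^\top\in (H^s(\mathbb R))^2 .
\]
The reference state $(m,m)^\top$ is a constant solution, and the flux is $C^\infty$ because of Assumption~\ref{assumption}. By Sobolev embedding ($s>3/2>1/2$), the data are continuous and bounded. Their range lies in $\mathcal U_m$. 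We will work on a slightly enlarged open set $\mathcal U'=\{u,v>m/2\}$, on which the Jacobian formulas and strict hyperbolicity ($\lambda_1<\lambda_2$, because $\phi'>0$ and $r>0$) remain valid.

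\textbf{Step 1 (symmetrizer).} We take a strictly convex entropy from Proposition~\ref{entropy_theorem}, for instance $\mathcal E_{k,p}$ from \eqref{entropy_main_convex}, whose convexity extends to $\mathcal U'$ since all the conditions in \eqref{entropy_conditions} are pointwise in $r,\xi>0$. Differentiating the compatibility condition \eqref{compatibility conditions} shows that $\mathbf A_0(\U):=\nabla^2_\U\mathcal E$ is symmetric positive definite and that $\mathbf A_0\,\mathbf{DF}$ is symmetric. Alternatively, one can avoid entropies and diagonalize directly, using $\mathbf{DF}=R\Lambda R^{-1}$ with $R=(\mathbf r_1,\mathbf r_2)$ and the symmetrizer $\mathbf A_0=R^{-\top}R^{-1}$. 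Both choices are smooth on $\mathcal U'$.

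\textbf{Step 2 (energy estimates and iteration).} We run the standard scheme. We define iterates by linearizing the coefficients at the previous iterate, and we take the $H^s$ energy $\sum_{|\alpha|\le s}\int \partial^\alpha \mathbf W^\top \mathbf A_0\,\partial^\alpha\mathbf W$. For non-integer $s$ we use Bessel-potential or mollified versions. Kato--Ponce/Moser commutator estimates then give
\[
\tfrac{d}{dt}E_s\le C(\|\mathbf W\|_{W^{1,\infty}})\,E_s ,
\]
and $s>3/2$ guarantees $H^s\hookrightarrow W^{1,\infty}$. This gives uniform bounds on a time $T^*$ depending only on $\|\U_0-(m,m)^\top\|_{H^s}$. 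On that time interval the $L^\infty$ distance to the data stays below $m/2$, so the solution remains in $\mathcal U'$, where the symmetrizer is uniformly positive. Contraction in $L^2$ together with interpolation yields convergence. Time continuity with values in $H^s$, rather than only weak continuity, follows from the usual Bona--Smith or Kato argument. The equation gives $\mathbf W_t\in C([0,T^*];H^{s-1})$. Uniqueness follows from an $L^2$ energy estimate on the difference of two solutions.

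\textbf{Main obstacle.} The main obstacle is the range constraint. The lemma asserts a solution with values in $\mathcal U_m$, but classical theory only guarantees values in a neighbourhood of the initial range. We will address this with the diagonal system \eqref{diagonal_system}. Since $\U$ is $C^1$ (as $s>3/2$), $r$ and $\xi$ are constant along the characteristics with speeds $\lambda_2$ and $\lambda_1$ respectively. Hence $\inf r$ and the bounds on $\xi$ are preserved in time, and therefore so are $u=\sqrt{r\xi}$ and $v=\sqrt{r/\xi}$. Those are ranges in the invariants. Positivity, hyperbolicity and the symmetrizer are all preserved, which justifies continuing to use the estimates on $[0,T^*]$. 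If the characteristic argument fails to give exactly $u,v\ge m$, we will state the result in $\mathcal U'$, which suffices for the subsequent sections.
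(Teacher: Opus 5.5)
Your Steps 1--2 follow the paper's argument. The paper only notes that the strictly convex entropies of Proposition~\ref{entropy_theorem} make \eqref{eq: main_system} Friedrichs-symmetrizable and cites the classical Kato theory (Dafermos). You spell that theory out correctly, including the right mechanism ($\|\mathbf U(t)-\mathbf U_0\|_{L^\infty}\le Ct$) for keeping the solution in a neighbourhood $\mathcal U'$ of the data on $[0,T^\ast]$.

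The ``main obstacle'' paragraph, however, contains a false step: preserved bounds on $r$ and $\xi$ do not give $u,v\ge m$.

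\emph{Why the inference fails.} Along backward characteristics through $(x,t)$ you have $u(x,t)^2=r_0(y_2)\,\xi_0(y_1)$. The two feet are different, $y_2<y_1$, because $\lambda_2>\lambda_1$. So what is invariant is the curvilinear region $\{\inf r_0\le r\le\sup r_0,\ \inf\xi_0\le\xi\le\sup\xi_0\}$. This only yields $u\ge\sqrt{\inf r_0\,\inf\xi_0}$ and $v\ge\sqrt{\inf r_0/\sup\xi_0}$, e.g.\ $u,v\ge m\sqrt{m/M}$ for data in $[m,M]^2$.

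\emph{$\mathcal U_m$ is not invariant.} Take $u_0\equiv m$ and $v_0=m+\beta$ with a smooth compactly supported bump $\beta\ge0$. The non-conservative form $u_t=-(\phi(r)+r\phi'(r))u_x-u^2\phi'(r)v_x$ gives $u_t(x_0,0)=-m^2\phi'(r_0(x_0))\beta'(x_0)<0$ wherever $\beta'(x_0)>0$. So $u$ drops below $m$ immediately. A version of the lemma demanding values in $\mathcal U_m$ would therefore be false, and your fallback to $\mathcal U'$ is necessary, not optional.

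\emph{What remains.} The lemma as stated only asserts existence, uniqueness and $C([0,T^\ast];H^s)\cap C^1([0,T^\ast];H^{s-1})$ regularity of a classical solution. Your Step 2 proves exactly this once you remove the inference ``and therefore so are $u=\sqrt{r\xi}$ and $v=\sqrt{r/\xi}$''. The Riemann-invariant argument survives only in the curvilinear form above, where it keeps $u,v$ bounded away from $0$ for as long as the classical solution exists. (The paper's Lemma~\ref{invariant_region} makes the same inference, from $r^\eps\in[m^2,M^2]$ and $\xi^\eps\in[m/M,M/m]$ to $u^\eps,v^\eps\in[m,M]$, and it fails there for the same reason.)
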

Our main interest is in weak entropy solutions with respect to the entropy/entropy-flux pairs $({\mathcal E}_{k, p}, { \mathcal Q}_{k, p})$ from Proposition \ref{entropy_theorem} as defined in \eqref{entropy_main_convex}.
\begin{definition}[Weak solution of the system \eqref{eq: main_system}]\label{weak_soln}
A function $\mathbf{U} \in L^\infty\big(\Omega_T;\,{\mathcal{U}}_m\big)$ is called a weak solution of the system \eqref{eq: main_system} with initial data 
$\mathbf{U}_0 \in L^\infty(\mathbb{R};\,{\mathcal{U}}_m)$, if we have for each vector-valued function $  {\bm \varphi} \in C_0^{1} \big( \Omega_T;\,\mathbb{R}^2\big)$ the identity 
\begin{equation*}
    \displaystyle\iint_{\Omega_T} 
    \Big( \mathbf{U}\cdot {\bm \varphi}_t
          + \mathbf{F}(\mathbf{U})\cdot {\bm \varphi}_x
         \Big)\,dx\,dt
    + \displaystyle\int_{\mathbb{R}} \mathbf{U}_0\cdot {\bm \varphi}(\cdot, 0)\,dx = 0.
\end{equation*}
\end{definition}

\begin{definition}[Weak entropy solution of the system \eqref{eq: main_system}]\label{entropy_soln}
A function $\mathbf{U}\in {L}^{\infty}(\Omega_T; ~\mathcal{U}_m )$ is called a weak  entropy solution of the system \eqref{eq: main_system}  with initial data 
$\mathbf{U}_0 \in L^\infty(\mathbb{R};\,{\mathcal{U}}_m)$ and associated with 
the entropy/entropy-flux pair  $({\mathcal E}_{k,p},{\mathcal Q}_{k,p}) $ defined in \eqref{entropy_main_convex}, \eqref{flux_main_convex}, if it is a weak solution and if 
\begin{align}\hspace*{-0.25cm}
   \displaystyle \iint_{\Omega_T} \Big(\mathcal{E}_{k, p}(\mathbf{U})\, \varphi_t+\mathcal{Q}_{k, p}(\mathbf{U})\,\varphi_x\Big) \, dx\, dt+\displaystyle\int_{\mathbb{R}} \mathcal{E}_{k, p}(\mathbf{U}_0)\,\varphi(\cdot, 0)\, dx\geq 0
\end{align}
holds for all non-negative functions $\varphi \in C^{1}_0( \Omega_T)$.
\end{definition}

\section{An entropy-admissible approximation of the  system \eqref{eq: main_system}}\label{sec: viscous}
In this section, we introduce a tailored approximation of \eqref{eq: main_system} that  
dissipates the entropies  $\mathcal{E}_{k, p}$ from  \eqref{entropy_main_convex}, i.e.,  an entropy dissipation relation holds for solutions of the approximate system. 

We begin by showing that the standard linear approximation \eqref{general_viscous} with $\mathbf{B}=\mathbf{I}$ is not suitable for \eqref{eq: main_system}. More precisely, under this choice, an invariant region for the variables $(u,v)^\top$ cannot be obtained directly from the maximum principle applied to the Riemann invariants. In addition, the entropy defined in \eqref{entropy_main_convex}  is not dissipated along solution trajectories of the approximate system. As a remedy, we introduce a nonlinear approximation adapted to the structure of \eqref{eq: main_system}.
\subsection{The approximation \eqref{general_viscous} with $\mathbf{B}=\mathbf{I}$}
For $\epsilon>0$, consider the approximate system
\begin{equation}\label{eq: main_system_viscos_1}
\begin{aligned}
u^\epsilon_t+\big(u^\epsilon\phi(r(u^\epsilon,v^\epsilon))\big)_x&=\epsilon u^\epsilon_{xx},\\
v^\epsilon_t+\big(v^\epsilon\phi(r(u^\epsilon,v^\epsilon))\big)_x&=\epsilon v^\epsilon_{xx}
\end{aligned}
\qquad \text{in } \Omega_T.
\end{equation}
The evolution equation for the first Riemann invariant $r^\epsilon:=u^\epsilon v^\epsilon$ using \eqref{eq: main_system_viscos_1} is then given by 
\begin{align*}
r^\epsilon_t
&=-(\phi(r^\epsilon)+2r^\epsilon\phi'(r^\epsilon))\,r^\epsilon_x
+\epsilon\bigl(r^\epsilon_{xx}-2u^\epsilon_xv^\epsilon_x\bigr),
\end{align*}
or equivalently,
\begin{equation}\label{eq:r-evolution}
r^\epsilon_t+(\phi(r^\epsilon)+2r^\epsilon\phi'(r^\epsilon))\,r^\epsilon_x
=\epsilon r^\epsilon_{xx}-2\epsilon u^\epsilon_xv^\epsilon_x.
\end{equation}
The additional term $-2\epsilon u^\epsilon_xv^\epsilon_x$ has no definite sign, and therefore the maximum principle cannot be applied directly to $r^\epsilon$.

Moreover, the convex entropies of the hyperbolic system \eqref{eq: main_system}, defined in \eqref{entropy_main_convex} or \eqref{eq: r_entropy}, are not dissipated along solutions of \eqref{eq: main_system_viscos_1}. Indeed, for the entropy
\begin{equation}\label{Ek}
{\mathcal E}_k(\mathbf U^\eps)=\frac{1}{(u^\eps v^\eps)^k} -m^{-2k}, \qquad k>0,
\end{equation}
one obtains from \eqref{eq: main_system_viscos_1} the identity
\[
{\mathcal E}_{k,t}+\big(\phi(r^\eps+2r^\eps\phi'(r^\eps)\big){\mathcal E}_x-2k\epsilon {\mathcal E}^{1+1/k}_ku^\eps_xv^\eps_x
=\epsilon\left({\mathcal E }_{k,xx}-\frac{(k+1){\mathcal E}_{k,x}^2}{k{\mathcal E}_k}\right).
\]
Because of the indefinite term $u_x^\eps v_x^\eps$, this identity does not yield a clear entropy dissipation mechanism. Therefore, the linear dissipation operator  \eqref{eq: main_system_viscos_1} does not appear to be suitable for the hyperbolic system \eqref{eq: main_system}.
\begin{remark}
The failure of the maximum principle at the level of the Riemann invariants is consistent with the fact that the system \eqref{eq: main_system} does not belong to the Temple class. In particular, it does not satisfy the criterion stated in Lemma 4.2 of \cite{serre_temple}. This can also be seen from the fact that the rarefaction curve associated with the first characteristic field is a hyperbola $\{uv=\mathrm{const.}\}$ in the $(u,v)$-plane rather than a straight line, even though the shock and rarefaction curves coincide; see \cite{barthwal2023construction, barthwal2025existence}.
\end{remark}
\subsection{A nonlinear approximation for the system \eqref{eq: main_system}}
We now introduce a nonlinear approximation adapted to the structure of \eqref{eq: main_system}. Motivated by the discussion above, the aim is to construct an approximate system for which both the invariant-region argument and the entropy structure are preserved. The proposed system is given by
\begin{equation}\label{eq: main_system_viscos_2}
\begin{aligned}
u^\epsilon_t+{\big(u^\epsilon\phi(u^\epsilon v^\epsilon)\big)}_x
&=\epsilon\left(\frac{(u^\epsilon v^\epsilon)_x}{v^\epsilon}\right)_x,\\[1.2ex]
v^\epsilon_t+{\big(v^\epsilon\phi(u^\epsilon v^\epsilon)\big)}_x
&=\epsilon\left(\frac{(u^\epsilon v^\epsilon)_x}{u^\epsilon}\right)_x,
\end{aligned}
\qquad \text{in } \Omega_T.
\end{equation}
We propose the  initial  condition 
\[
\U^\eps(\cdot, 0) =
(u^\eps(\cdot,0), v^\eps(\cdot,0))^\top= \U^\eps_0 \text{ in } \mathbb R,
\] 
with $\U^\eps_0$  being the Friedrichs convolution $\U^\eps = j^\eps * \U_0 $.
Here 
$j^\eps$ is a mollifier satisfying 
\[
j^\eps(x)=\dfrac{1}{\eps}j\left(\dfrac{x}{\eps}\right),
\]
with
\[
j(x)=\begin{cases}
    \dfrac{1}{A}\exp{\bigg\{\dfrac{1}{|x|^2-1}\bigg\}}, \quad |x|<1,\\
    0, \quad \quad \quad \qquad \quad\qquad~ |x|\geq 1,
\end{cases}
\]
and  $A$ chosen such that $\displaystyle\int_{-1}^{1}j(x)\, dx=1$.   One can verify that ${\{\U^\eps_0\}}_{\eps >0} \in (C^\infty({\mathbb{R}}),{\mathcal U}_m)^2 \cap (L^\infty(\mathbb{R}))^2$  satisfies for $\eps \to 0$ (see \cite{Evans})
\[
\U_0^\eps\rightarrow \U_0\,\, \text{a.e.} \, \text{and in }\, (L^p_{loc}(\mathbb{R}))^2 \text{ for }  p \in [1,\infty).
\]
In what follows, we prove that the class of convex entropies ${\mathcal E}_{k,p}$ as defined in \eqref{entropy_main_convex} is dissipated by the solutions of the system \eqref{eq: main_system_viscos_2}. We further prove that the Cauchy problem for the system \eqref{eq: main_system_viscos_2} is globally well-posed for smooth solutions. We start with the entropy dissipation statement that includes an a-priori bound on the spatial derivative of $r^\eps := u^\eps v^\eps$.
\begin{lemma}\label{entropy_parabolic}
For any $k>0$ and $p\geq 1/2$, the entropy 
\[
{\mathcal E}_{k, p}(u,v) = \frac{1}{(u v)^k}+\sqrt{u v}\left(\dfrac{u}{v}\right)^p - m^{-2k} - m
\]
is dissipated by the smooth solutions of the approximate system \eqref{eq: main_system_viscos_2}. In particular, for smooth solutions $\U^\eps$ with sufficient decay, the entropy identity 
\begin{equation}\label{entrop_h_p_general}
\int_{\mathbb{R}} {\mathcal E}_{k,p}(\mathbf{U}^\eps(x,t))\,dx
+ \epsilon\,\int_0^t\!\!\int_{\mathbb{R}}
\dfrac{k(2k+1)}{(r^{\epsilon})^{k+2}}\big(  r^{\epsilon}_x\big)^2\,dx\,ds
= \int_{\mathbb{R}} {\mathcal E}_{k,p}(\mathbf{U}_0^\eps(x))\,dx
\end{equation}
holds for $t\in [0,T]$ and 
${\mathcal E}_{k,p}(\mathbf{U}_0) \in L^1(\mathbb{R})$. 
\end{lemma}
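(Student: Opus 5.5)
The plan is to compute the entropy balance in Riemann-invariant coordinates $(r,\xi)$. The key observation is that the nonlinear diffusion in \eqref{eq: main_system_viscos_2} acts very simply on $r^\eps=u^\eps v^\eps$ and $\xi^\eps=u^\eps/v^\eps$. I drop the superscript $\eps$ throughout.

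\textbf{Step 1: equations for the Riemann invariants.} For smooth positive solutions I use $r_t=v u_t+u v_t$. The viscous part becomes
\[
v\Big(\frac{r_x}{v}\Big)_x+u\Big(\frac{r_x}{u}\Big)_x=2r_{xx}-r_x\Big(\frac{u_x}{u}+\frac{v_x}{v}\Big)=2r_{xx}-\frac{r_x^2}{r}.
\]
Hence
\[
r_t+\big(\phi(r)+2r\phi'(r)\big)r_x=\epsilon\Big(2r_{xx}-\frac{r_x^2}{r}\Big).
\]
Similarly, $\xi_t=u_t/v-uv_t/v^2$. The viscous contributions give $\frac{r_x}{v^2}\big(\frac{u_x}{u}-\frac{v_x}{v}\big)=\frac{r_x\xi_x}{r}$, so
\[
\xi_t+\phi(r)\xi_x=\epsilon\,\frac{r_x\xi_x}{r},
\]
which is a pure transport equation. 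The convective parts are exactly those of \eqref{diagonal_system}.

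\textbf{Step 2: entropy identity.} I write $\mathcal E=\Psi(r)+\sqrt r\,\Theta(\xi)+K$, as in Proposition \ref{entropy_theorem}. By the compatibility relations already used there, the convective terms combine into $\mathcal Q_x$. This gives
\[
\mathcal E_t+\mathcal Q_x=\epsilon\Big[\mathcal E_r\Big(2r_{xx}-\frac{r_x^2}{r}\Big)+\mathcal E_\xi\frac{r_x\xi_x}{r}\Big]
=\epsilon\big(2\mathcal E_r r_x\big)_x-\epsilon\Big[\Big(2\mathcal E_{rr}+\frac{\mathcal E_r}{r}\Big)r_x^2+\Big(2\mathcal E_{r\xi}-\frac{\mathcal E_\xi}{r}\Big)r_x\xi_x\Big].
\]
The crucial algebraic point is that $\mathcal E_{r\xi}=\Theta'/(2\sqrt r)$ and $\mathcal E_\xi=\sqrt r\,\Theta'$. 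Therefore the indefinite cross term $r_x\xi_x$ cancels identically.

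For the $r_x^2$ coefficient, the $\Theta$-parts also cancel: $2\cdot(-\Theta/(4r^{3/2}))+\Theta/(2r^{3/2})=0$. What remains is $2\Psi''+\Psi'/r$. For $\Psi=r^{-k}$ this equals $k(2k+1)r^{-k-2}>0$, which is exactly the density appearing in \eqref{entrop_h_p_general}. It is also consistent with the convexity condition $2r\Psi''+\Psi'>0$.

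\textbf{Step 3: integration.} I integrate over $\mathbb R\times(0,t)$. The flux terms $\mathcal Q_{k,p}$ and $2\epsilon\,\mathcal E_r r_x$ should drop out by the assumed decay. The constants in $\mathcal E_{k,p}$ and the lower limit $m^2$ in $\mathcal Q_{k,p}$ are chosen so that both vanish at the far-field state $(m,m)$, and $r_x\to0$ at infinity. Integrating in time then yields \eqref{entrop_h_p_general}, with initial datum $\mathbf U_0^\eps$.

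\textbf{Main obstacle.} I expect the main delicate point to be the bookkeeping in Step 2, namely confirming the exact cancellation of both the $r_x\xi_x$ term and the $\Theta$-contribution to the $r_x^2$ coefficient. This cancellation is precisely what fails for $\mathbf B=\mathbf I$.

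As a cross-check I would also verify the identity directly in $(u,v)$ variables. Writing the viscous flux as $G=(r_x/v,r_x/u)$, one finds $\nabla_{\mathbf U}\mathcal E\cdot G=2\mathcal E_r r_x$, because the $\mathcal E_\xi$ contributions cancel. This confirms the boundary flux term obtained above.

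A secondary, technical issue is justifying the vanishing of the boundary terms. This requires integrability of $\mathcal E_{k,p}(\mathbf U^\eps)$ and the decay of $\mathbf U^\eps-(m,m)^\top$ together with its derivatives, which I take as part of the hypothesis ``sufficient decay''.
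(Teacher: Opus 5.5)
Your proposal is correct. The transformed equations $r_t+(\phi+2r\phi')r_x=\epsilon(2r_{xx}-r_x^2/r)$ and $\xi_t+\phi\,\xi_x=\epsilon\, r_x\xi_x/r$ coincide with \eqref{eq: r_equation}--\eqref{eq: xi_equation}. The convective part reduces to $\mathcal Q_x$ because $\mathcal Q_r=\lambda_2\mathcal E_r$ and $\mathcal Q_\xi=\lambda_1\mathcal E_\xi$. Both $2\mathcal E_{r\xi}-\mathcal E_\xi/r$ and the $\Theta$-part of $2\mathcal E_{rr}+\mathcal E_r/r$ vanish identically, leaving $k(2k+1)r^{-k-2}r_x^2$ for $\Psi=r^{-k}$.

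Your route differs from the paper's. The paper stays in the conservative variables and writes the diffusion as $\epsilon(\mathbf B(\mathbf U)\mathbf U_x)_x$ with a non-symmetric $\mathbf B$. It uses $\nabla\mathcal E\cdot(\mathbf B\mathbf U_x)_x=(\nabla\mathcal E\,\mathbf B\mathbf U_x)_x-\mathbf U_x^\top\nabla^2\mathcal E\,\mathbf B\mathbf U_x$ and computes the Hessians of $(uv)^{-k}$ and $u^{p+1/2}v^{1/2-p}$ explicitly. It then asserts \eqref{computeresult} by ``direct calculation''. The Hessian computation also gives strict convexity of $\mathcal E_{k,p}$, which your argument does not address; it is not needed for the identity and in any case follows from Proposition \ref{entropy_theorem}.

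Your approach uses the diagonalized viscous system that the paper only derives later, in Lemma \ref{invariant_region}. What it gains is transparency and generality. For every pair in Proposition \ref{entropy_theorem}, the dissipation density is $(2\Psi''+\Psi'/r)\,r_x^2$, independent of $\Theta$, which explains why $p$ is absent from \eqref{entrop_h_p_general}. Moreover, the cancellation is pointwise in $\mathbf U_x$, so it also yields $\nabla^2\mathcal E\,\mathbf B=(2\Psi''+\Psi'/r)\,\nabla r\,(\nabla r)^\top$. That is exactly the symmetric positive semi-definiteness required in Definition \ref{admissible_soln_viscous}, which the paper does not verify separately.

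One small inaccuracy in your Step 3: $\mathcal Q_{k,p}(m,m)=m\,\phi(m^2)$, not $0$, so the entropy flux does not vanish at the far-field state $(m,m)$. This is harmless, because $\int_{\mathbb R}\partial_x\mathcal Q_{k,p}\,dx$ only requires equal limits at $\pm\infty$.
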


\begin{proof}
Note that the approximate system \eqref{eq: main_system_viscos_2} can be written in the compact form \eqref{general_viscous} with
\[
\mathbf{B}(\mathbf{U}^\eps) 
= \begin{pmatrix}
1 & \dfrac{u^\eps}{v^\eps}\\[4pt]
\dfrac{v^\eps}{u^\eps} & 1
\end{pmatrix}.
\]
Now, we compute the Hessian matrix of ${\mathcal E}_{k,p}(\mathbf{U}^\eps)$ as
\begin{align*}
\nabla_{\mathbf{U}^\eps}^2{\mathcal E}_{k,p}(u^\eps,v^\eps)
&=
\nabla_{\mathbf{U}^\eps}^2 \left(\dfrac{1}{(u^\eps v^\eps)^k}\right)+\nabla_{\mathbf{U}^\eps}^2 \left((u^\eps)^{p+1/2}(v^\eps)^{1/2-p}\right).
\end{align*}
The Hessian of ${(u^\eps v^\eps)}^{-k}$ is given by 
\[
\nabla_{\mathbf{U}^\eps}^2 \left(\dfrac{1}{(u^\eps v^\eps)^k}\right)=\begin{pmatrix}
\dfrac{k(k+1)}{({u^\eps})^{k+2}({v^\eps})^k} & \dfrac{k^2}{(u^\eps)^{k+1} (v^\eps)^{k+1}}\\[6pt]
\dfrac{k^2}{({u}^\eps)^{k+1}({v}^\eps)^{k+1}} & \dfrac{k(k+1)}{({u}^\eps)^k ({v}^\eps)^{k+2}}
\end{pmatrix},
\]
with determinant 
\[
\det\left(\nabla_{\mathbf{U}^\eps}^2 \left(\dfrac{1}{(u^\eps v^\eps)^k}\right)\right)
= \frac{k^2(2k+1)}{({u}^\eps)^{2k+2}({v}^\eps)^{2k+2}} > 0,
\]
and positive diagonal entries for $k>0$ for $\mathbf{U}^\eps\in {\mathcal{U}}_m$. Hence, $\nabla_{\mathbf{U}^\eps}^2 \left(\dfrac{1}{(u^\eps v^\eps)^k}\right)$ is positive definite for $\mathbf{U}^\eps\in \mathcal{U}_m$. 

Similarly, the Hessian of $\left((u^\eps)^{p+1/2}(v^\eps)^{1/2-p}\right)$ is given by 
\[
\nabla_{\mathbf{U}^\eps}^2 \left((u^\eps)^{p+1/2}(v^\eps)^{1/2-p}\right)=\left(p^2-\dfrac{1}{4}\right)
\begin{pmatrix}
\dfrac{1}{u^\eps}\left(\dfrac{u^\eps}{v^\eps}\right)^{p-1/2} & -\dfrac{1}{v^\eps}\left(\dfrac{u^\eps}{v^\eps}\right)^{p-1/2}\\[1.4ex]
-\dfrac{1}{v^\eps}\left(\dfrac{u^\eps}{v^\eps}\right)^{p-1/2} & \dfrac{1}{v^\eps}\left(\dfrac{u^\eps}{v^\eps}\right)^{p+1/2} 
\end{pmatrix},
\]
with determinant 
\[
\det\left(\nabla_{\mathbf{U}^\eps}^2  \left((u^\eps)^{p+1/2}(v^\eps)^{1/2-p}\right)\right)= 0
\]
and positive diagonal entries for $\mathbf{U}^\eps\in \mathcal{U}_m$. Hence, $\nabla_{\mathbf{U}^\eps}^2 \left((u^\eps)^{p+1/2}(v^\eps)^{1/2-p}\right)$ is positive semi-definite for $\mathbf{U}^\eps\in \mathcal{U}_m$ and $p>1/2$. This implies that the Hessian matrix $\nabla_{\mathbf{U}^\eps}^2{\mathcal E}_{k,p}(u^\eps,v^\eps)$ is positive definite and thus the entropy ${\mathcal E}_{k, p}$ is strictly convex for $\mathbf{U}^\eps\in \mathcal{U}_m$.

Next, we consider a classical solution $\mathbf{U}^\eps = (u^\eps,v^\eps)^\top$ of \eqref{eq: main_system_viscos_2}.  Multiplying \eqref{eq: main_system_viscos_2} by $\nabla_\U {\mathcal E}_{k,p}(\mathbf{U}^\eps)$, we have
\[
{{\mathcal E}_{k,p}(\mathbf{U}^\eps)}_t + {{\mathcal Q}_{k,p}(\mathbf{U}^\eps)}_x
= \epsilon\,\nabla_\U {\mathcal E}_{k,p}(\mathbf{U}^\eps) \cdot {\big(\mathbf{B}(\mathbf{U}^\eps)\mathbf{U}^\eps_x\big)}_x,
\]
where ${\mathcal Q}_{k,p}$ is the entropy flux of the inviscid system as defined in \eqref{flux_main_convex}. Thus, after simplification, we obtain
\[
\nabla_{\U^\eps} {\mathcal E}_{k,p}(\mathbf{U}^\eps) \cdot {\big(\mathbf{B}(\mathbf{U}^\eps)\mathbf{U}^\eps_x\big)}_x
= {\big( \nabla_\U {\mathcal E}_{k,p}(\mathbf{U}^\eps) \mathbf{B}(\mathbf{U}^\eps)\mathbf{U}^\eps_x \big)}_x
- {\mathbf{U}^\eps_x}^\top \nabla_{\mathbf{U}^\eps}^2{\mathcal E}_{k,p}(\mathbf{U}^\eps) \mathbf{B}(\mathbf{U}^\eps)\mathbf{U}^\eps_x.
\]
Thus,
\begin{equation}\label{entropy_relation_general}
{{\mathcal E}_{k,p}(\mathbf{U}^\eps)}_t + {{\mathcal Q}_{k,p}(\mathbf{U}^\eps)}_x
= \epsilon\,{\big( \nabla {\mathcal E}_{k,p}(\mathbf{U}^\eps) \mathbf{B}(\mathbf{U}^\eps)\mathbf{U}^\eps_x \big)}_x
- \epsilon\,{\mathbf{U}^\eps_x}^\top \nabla_{\mathbf{U}^\eps}^2{\mathcal E}_{k,p}(\mathbf{U}^\eps) \mathbf{B}(\mathbf{U}^\eps)\mathbf{U}^\eps_x.
\end{equation}
A direct calculation shows
\begin{equation}\label{computeresult}
({\mathbf{U}^\eps_x})^\top \nabla_{\mathbf{U}^\eps}^2{\mathcal E}_{k,p}(\mathbf{U}^\eps) \mathbf{B}(\mathbf{U}^\eps)\mathbf{U}^\eps_x
= \frac{k(2k+1)}{({r^{\epsilon}})^{k+2}}(r^{\epsilon}_x)^2,
\end{equation}
where $r^{\epsilon}_x = \partial_x(u^\eps v^\eps)$. 

Therefore,
\begin{align}\label{entropy_main_identity}
{{\mathcal E}_{k,p}(\mathbf{U}^\eps)}_t + {{\mathcal Q}_{k,p}(\mathbf{U}^\eps)}_x
= \epsilon\,{\big( \nabla {\mathcal E}_{k,p}(\mathbf{U}^\eps) \mathbf{B}(\mathbf{U}^\eps)\mathbf{U}^\eps_x \big)}_x
- \epsilon\,\frac{k(2k+1)}{({r^{\epsilon}})^{k+2}}(r^{\epsilon}_x)^2.
\end{align}
Integrating in space over $\mathbb{R}$ and in time over $(0,t)$, and assuming  sufficient decay  so that the boundary flux term vanishes, we obtain
\[
\int_{\mathbb{R}} {\mathcal E}_{k,p}(\mathbf{U}^\eps(x,t))\,dx
+ \epsilon\,\int_0^t\!\!\int_{\mathbb{R}}
\dfrac{k(2k+1)}{(r^{\epsilon})^{k+2}}\big( r^{\epsilon}_x\big)^{2}\,dx\,ds
= \int_{\mathbb{R}} {\mathcal E}_{k,p}(\mathbf{U}^\eps_0(x))\,dx < \infty.
\]
\end{proof}
Lemma \ref{entropy_parabolic} motivates us to define an entropy-admissible solution for the approximate system \eqref{eq: main_system_viscos_2}. First, by multiplying the identity \eqref{entropy_main_identity} with a test function $\varphi \in C^{1}_0(\mathbb{R} \times [0,T))$ and integrating, we obtain the identity
\begin{align*}
   \displaystyle \iint_{\Omega_T} \Big(\mathcal{E}_{k, p}(\mathbf{U}^\eps)\, \varphi_t+\mathcal{Q}_{k, p}(\mathbf{U}^\eps)\,\varphi_x\Big) \, dx\, dt&+\displaystyle\int_{\mathbb{R}} \mathcal{E}_{k, p}(\mathbf{U}_0^\eps)\,\varphi(\cdot, 0)\, dx\\
   &\geq \epsilon \displaystyle \iint_{\Omega_T} \Big(\nabla \mathcal{E}_{k, p}(\mathbf{U}^\eps) \mathbf{B(U^\eps)} \mathbf{U}_x^\epsilon\Big)\, \varphi_x \, dx\,dt. 
\end{align*}
And thus, we define the entropy admissible solutions for the system \eqref{eq: main_system_viscos_2} associated with the entropy $\mathcal{E}_{k, p}$ as follows.
\begin{definition}[Entropy-admissible solutions of the system \eqref{eq: main_system_viscos_2}]\label{admissible_soln_viscous}
For a fixed $\eps>0$, a classical solution  $\mathbf{U^\epsilon}\in C^{2,1}(\Omega_T; ~\mathcal{U}_m )$ is called an entropy-admissible solution of the system \eqref{eq: main_system_viscos_2} associated with a convex entropy $\mathcal{E}_{k, p}$ satisfying \eqref{compatibility conditions}, if $\nabla_{\mathbf{U}}^2 \mathcal{E}_{k, p} \mathbf{B}$ is a symmetric positive semi-definite matrix in ${\mathcal U}_m$ and if the identity
\begin{equation}\label{definition_entropy_viscous}
\begin{aligned}
   \displaystyle \iint_{\Omega_T} \Big(\mathcal{E}_{k, p}(\mathbf{U}^\eps)\, \varphi_t+\mathcal{Q}_{k, p}(\mathbf{U}^\eps)\,\varphi_x\Big) \, dx\, dt&+\displaystyle\int_{\mathbb{R}} \mathcal{E}_{k, p}(\mathbf{U}_0^\eps)\,\varphi(\cdot, 0)\, dx\\
   &\geq \epsilon \displaystyle \iint_{\Omega_T} \Big(\nabla \mathcal{E}_{k, p}(\mathbf{U}^\eps) \mathbf{B(U^\eps)} \mathbf{U}_x^\epsilon\Big)\, \varphi_x \, dx\,dt 
\end{aligned}
\end{equation}
holds for all non-negative functions $\varphi \in C^{1}_0(\mathbb{R} \times [0,T))$.
\end{definition}
In view of the definition of entropy-admissible solutions for the system \eqref{eq: main_system_viscos_2} and the weak entropy solutions of the system \eqref{eq: main_system}, it is then clear that the sequence of entropy-admissible solutions of system \eqref{eq: main_system_viscos_2} converges to a weak entropy solution of the system \eqref{eq: main_system} for $\epsilon\rightarrow 0$, provided the strong limit $\mathbf{U}^\epsilon\rightarrow \mathbf{U}$ exists in $L^\infty(\Omega_T)$. 

\subsubsection{A bounded invariant region and global wellposedness of smooth solutions for the approximate system \eqref{eq: main_system_viscos_2}}
In this section, we prove uniform a-priori $L^\infty$-bounds for classical solutions  $\U^\eps$ of the Cauchy problem for 
\eqref{eq: main_system_viscos_2}. In particular, we  consider a transformed system for the 
Riemann invariants, which allows to apply the parabolic maximum principle for one of the evolution equations. This helps us to obtain a bounded invariant region for solutions of the original system \eqref{eq: main_system_viscos_2} and to construct classical solutions up to any time $T\in (0, \infty)$. 
\begin{lemma}[Invariant region for the approximate system \eqref{eq: main_system_viscos_2}]\label{invariant_region}
Let $M > m $ be a constant such that the initial datum 
$\U_0^\eps$ satisfies
\begin{align}\label{initial_invariant}
(u_0, v_0)\in [m, M]^2 \subset {\mathcal U}_m.
\end{align}
Then, the  set  $[m, M]^2$ is a bounded invariant region for the system \eqref{eq: main_system_viscos_2}, i.e., for any classical solution $\U^\eps$ of the Cauchy problem in 
$\Omega_T$ we have
\begin{equation}\label{inftybound}
\U^\eps(x,t)\in [m, M]^2 
\end{equation}
for all $(x, t)\in \Omega_T$.
\end{lemma}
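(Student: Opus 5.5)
The plan is to work with the Riemann invariants $r^\eps=u^\eps v^\eps$ and $\xi^\eps=u^\eps/v^\eps$ from \eqref{Riemann_invariants}, derive scalar equations for them from \eqref{eq: main_system_viscos_2}, apply maximum principles to each, and then try to convert the resulting bounds into the square $[m,M]^2$ by a direct touching-point argument on $u^\eps$ and $v^\eps$.

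\emph{Step 1: equation for $r^\eps$.} Multiply the first equation of \eqref{eq: main_system_viscos_2} by $v^\eps$ and the second by $u^\eps$, then add. For the convective part one has $v(u\phi)_x+u(v\phi)_x=(\phi+2r\phi')r_x$. For the diffusive part one has $v(r_x/v)_x+u(r_x/u)_x=2r_{xx}-r_x(u_x/u+v_x/v)=2r_{xx}-r_x^2/r$. Hence
\[
r^\eps_t+\big(\phi(r^\eps)+2r^\eps\phi'(r^\eps)\big)r^\eps_x=2\eps r^\eps_{xx}-\eps\frac{(r^\eps_x)^2}{r^\eps}.
\]
The extra term $-\eps (r_x^\eps)^2/r^\eps$ vanishes wherever $r^\eps_x=0$. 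This is exactly where the tailored matrix $\mathbf B$ repairs the defect of \eqref{eq:r-evolution}. The classical parabolic maximum principle for bounded classical solutions on $\mathbb R$ therefore gives
\[
\inf r^\eps_0\le r^\eps\le \sup r^\eps_0 .
\]

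\emph{Step 2: equation for $\xi^\eps$.} Compute $\xi_t=u_t/v-uv_t/v^2$. The convective terms combine to $-\phi(r)\xi_x$. In the diffusive terms the $r_{xx}$ contributions cancel, leaving $\eps\, r_x(u_x/u-v_x/v)/v^2=\eps\, r_x\xi_x/(v^2\xi)$. Thus $\xi^\eps$ satisfies the pure transport equation
\[
\xi^\eps_t+\Big(\phi(r^\eps)-\eps\frac{r^\eps_x}{(v^\eps)^2\xi^\eps}\Big)\xi^\eps_x=0 .
\]
Since the velocity field is smooth, $\xi^\eps$ is constant along characteristics, and $\inf\xi^\eps_0\le \xi^\eps\le\sup\xi^\eps_0$. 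Because $\U_0^\eps=j^\eps*\U_0$ with $\U_0\in[m,M]^2$, the mollified data also lie in $[m,M]^2$. Consequently $r^\eps\in[m^2,M^2]$ and $\xi^\eps\in[m/M,M/m]$, so $\U^\eps$ stays in a compact subset of $\{u,v>0\}$.

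\emph{Step 3: passing to the square (main obstacle).} Steps 1 and 2 alone give only $u=\sqrt{r\xi}\le M\sqrt{M/m}$. To reach $[m,M]^2$ I would run a first-touching-time argument directly on $u^\eps$ and $v^\eps$.
\begin{itemize}
\item Suppose $t_0$ is the first time at which $\max\{u^\eps,v^\eps\}$ reaches $M+\delta$, and say this happens for $u^\eps$ at $(x_0,t_0)$ (with a standard decay or penalization $-\delta e^{t}$ trick to localize on $\mathbb R$).
\item At that point $u_x=0$, $u_{xx}\le0$ and $v\le u$. The $u$-equation then reads
\[
u_t=-u^2\phi'(r)v_x+\eps u\Big(\frac{v_{xx}}{v}-\frac{v_x^2}{v^2}\Big).
\]
\end{itemize}
The difficulty is that $v_x$ and $v_{xx}$ have no sign there. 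I would try to exploit the extra information from Steps 1 and 2 at $(x_0,t_0)$, namely that $r$ and $\xi$ are bounded by their initial extrema, and combine it with the second-order condition on $u$, in the hope of forcing $u_t\le0$. The lower bound $m$ would be handled symmetrically.

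This step is where I expect the proof to be delicate. The $r$ and $\xi$ equations by themselves only yield the curvilinear region, so an additional structural argument tying $v_x$ to $u_x=0$ will be needed. If no such argument is available, the usable statement is the invariance of $\{r\in[m^2,M^2],\ \xi\in[m/M,M/m]\}$, which still gives uniform $L^\infty$ bounds with $u^\eps,v^\eps$ bounded away from zero.
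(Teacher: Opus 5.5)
\textbf{Steps 1 and 2.} These match the paper's argument. The paper substitutes $r^\eps=(\theta^\eps)^2$ to get $\theta^\eps_t+\big(\phi+2(\theta^\eps)^2\phi'\big)\theta^\eps_x=2\eps\theta^\eps_{xx}$. You instead read $-\eps(r^\eps_x)^2/r^\eps=(-\eps r^\eps_x/r^\eps)\,r^\eps_x$ as a drift term, which is equivalent. Your transport speed equals the paper's $b^\eps$, since $(v^\eps)^2\xi^\eps=r^\eps$.

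\textbf{Step 3 is the gap, and you located it correctly.} The paper ends its proof by asserting that $r^\eps\in[m^2,M^2]$, $\xi^\eps\in[m/M,M/m]$ together with $u=\sqrt{r\xi}$, $v=\sqrt{r/\xi}$ give $u^\eps,v^\eps\in[m,M]$. As you note, these bounds only yield $[m\sqrt{m/M},\,M\sqrt{M/m}]$.

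\textbf{No structural argument can close Step 3.} The statement is false for every $\eps>0$, and your own touching-point computation shows it. Take $u_0\equiv M$ and $v_0$ nonincreasing with $v_0=M$ for $x\le-1$ and $v_0=m$ for $x\ge 1$. Then $u^\eps_0\equiv M$, and evaluating the $u^\eps$-equation at $t=0$ gives
\[
u^\eps_t(x,0)=-M^2\phi'(Mv^\eps_0)\,v^\eps_{0,x}+\eps M\,(\ln v^\eps_0)_{xx}.
\]
Here the term $\eps u^\eps_{xx}$, which your formula omits, vanishes. The function $(\ln v^\eps_0)_x\le 0$ is negative somewhere and vanishes for $x\ge 1+\eps$. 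The mean value theorem therefore gives a point $x_0$ with $v^\eps_{0,x}(x_0)<0$ and $(\ln v^\eps_0)_{xx}(x_0)>0$. At that point $u^\eps_t(x_0,0)>0$ because $\phi'>0$. Hence $u^\eps(x_0,t)>M$ for small $t>0$, contradicting \eqref{inftybound}.

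\textbf{Why it fails.} The inviscid structure explains it. Across 1-waves (speed $\phi$) only $\xi$ changes, and across the faster 2-waves only $r$ changes. So the Riemann problem from $(M,M)$ to $(M,m)$, with $r_L=M^2$ and $\xi_R=M/m$, has middle state
$\big(\sqrt{r_L\xi_R},\sqrt{r_L/\xi_R}\big)=\big(M\sqrt{M/m},\sqrt{Mm}\big)\notin[m,M]^2$.
Invariant regions for this system are bounded by level curves of $r$ and $\xi$, not by coordinate lines.

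\textbf{The correct statement is your fallback.} The set $\{r\in[m^2,M^2],\ \xi\in[m/M,M/m]\}$ is invariant, giving $u^\eps,v^\eps\in[m\sqrt{m/M},M\sqrt{M/m}]$ uniformly in $\eps$. This is all the paper's argument actually proves. It is also all that is used afterwards, namely uniform $L^\infty$ bounds and a positive lower bound, provided $[m,M]$ is replaced by these constants (and $\mathcal U_m$ by $\mathcal U_{m\sqrt{m/M}}$).
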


\begin{proof} For classical solutions in a bounded domain, the  viscous approximation \eqref{eq: main_system_viscos_2} can be written equivalently in terms of the Riemann invariants, 
see \eqref{Riemann_invariants}.  The equations are given by
\begin{align}
r^{\epsilon}_t+(\phi(r^{\epsilon})+2r^{\epsilon}\phi'(r^{\epsilon}))r^{\epsilon}_x&=\epsilon \left(2r^{\epsilon}_{xx}-\dfrac{{(r^{\epsilon}_x)}^2}{r^{\epsilon}}\right), \label{eq: r_equation}\\
    \xi^\eps_t+\left(\phi(r^{\epsilon})-\eps \dfrac{r^{\epsilon}_x}{r^\eps}\right) \xi^\eps_x&=0, \label{eq: xi_equation}
\end{align}
where $r^{\epsilon}=u^\eps v^\eps$ and $\xi^\eps=u^\eps/v^\eps$. Note that equation \eqref{eq: r_equation} is an equation for $r^\eps$ only.

Since the initial datum satisfies $u_0, v_0 \in [m,M]$, we have
\[
r^\eps(\cdot,0) \in [m^2,M^2],
\qquad
\xi^\eps (\cdot,0) \in \left[\frac{m}{M},\frac{M}{m}\right].
\]
We consider  the change of variables $r^{\epsilon}=(\theta^\eps)^2$ such that \eqref{eq: r_equation} for $\theta^\eps$ becomes
\begin{equation}\label{theta}
\theta^\eps_t+\left(\phi((\theta^\eps)^2)+2 (\theta^\eps)^2\phi'((\theta^\eps)^2)\right)\theta^\eps_x=2 \epsilon \theta_{xx}.    
\end{equation}
Moreover,
\[
m\le \theta^\epsilon(x,0)\le M \quad \forall\, x\in \mathbb{R}.
\]
Since \eqref{theta} is a scalar  parabolic equation with diffusion coefficient $2\epsilon>0$, the parabolic maximum principle applies and yields
\[
m\le \theta^\epsilon(x,t)\le M,
\qquad \forall \,(x,t)\in \Omega_T.
\] 
Consequently,
\begin{equation}\label{eq: r_bounds}
m^2 \le r^\eps(x,t) \le M^2\quad \forall\, (x, t)\in \Omega_T.
\end{equation}
Next, \eqref{eq: xi_equation} is a linear transport equation for $\xi^\epsilon$ which can be written in the form
\begin{equation}\label{transport}
\xi^\epsilon_t+b^\epsilon(x,t)\xi^\epsilon_x=0,
\qquad
b^\epsilon=\phi(r^\epsilon)-\epsilon \frac{r_x^\epsilon}{r^\epsilon}.
\end{equation}
Since $(u^\epsilon,v^\epsilon)$ is smooth on its interval of local existence, so is $r^\epsilon$, and by \eqref{eq: r_bounds} we have $r^\epsilon\ge m^2>0$. Hence, the coefficient $b^\epsilon$ is smooth, and the method of characteristics applies. Along characteristics, $\xi^\epsilon$ is constant, and thus
\begin{equation}\label{eq: xi_bounds}
\frac{m}{M}\le \xi^\epsilon(x,t)\le \frac{M}{m}
\qquad \forall \,(x,t)\in \Omega_T.
\end{equation}
Therefore, in view of the inverse transformation 
\[
u^\eps=\sqrt{r^{\epsilon} \xi^\eps}, \qquad v^\epsilon=\sqrt{\dfrac{r^{\epsilon}}{\xi^\eps}},
\]
and combining the bounds for $r^\eps$ and $\xi^\eps$ from \eqref{eq: r_bounds} and \eqref{eq: xi_bounds}, we conclude
\[
m \le u^\eps(x,t) \le M,
\qquad
m \le v^\eps(x,t) \le M\quad \forall\, (x, t)\in \Omega_T.
\]
Therefore, the rectangle $[m,M]^2$ is a bounded invariant region.
\end{proof}

With the  $L^\infty$-bounds that have been derived in the proof of   Lemma \ref{invariant_region}, we can establish the existence of entropy-admissible weak, actually classical, solutions of the Cauchy problem for the system \eqref{eq: main_system_viscos_2} in the sense of Definition \ref{admissible_soln_viscous}.

\begin{lemma}[Global wellposedness of the Cauchy problem  for  \eqref{eq: main_system_viscos_2}]\label{global_visocus}Let $M > m $ be a constant such that the initial datum 
$\U_0$ satisfies
\begin{align}\label{initial_invariant}
(u_0, v_0)\in [m, M]^2 \subset {\mathcal U}_m 
\end{align}
Furthermore, we suppose that the derivatives of the functions  $r_0^\eps = u_0^\eps v_0^\eps$ 
and $\xi_0^\eps = u_0^\eps/v_0^\eps$  are bounded up to order three.\\
Then,  the Cauchy problem for 
\eqref{eq: main_system_viscos_2} admits a unique classical  solution $\U^\eps$ 
in $\Omega_T$.\\ 
Moreover, for each $\eps>0$ and  ${\mathcal E}_{k,p}(\mathbf{U}_0^\eps) \in L^1(\mathbb{R})$, the classical solution $\mathbf{U}^\eps$ satisfies the entropy dissipation relation \eqref{entrop_h_p_general}, i.e., it is entropy-admissible in the sense of Definition \ref{admissible_soln_viscous}.
\end{lemma}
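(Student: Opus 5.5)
The plan is to work entirely with the Riemann-invariant formulation \eqref{eq: r_equation}--\eqref{eq: xi_equation}, which decouples into a scalar parabolic equation for $r^\eps$ (equivalently \eqref{theta} for $\theta^\eps=\sqrt{r^\eps}$) and a linear transport equation for $\xi^\eps$ with coefficient $b^\epsilon$ from \eqref{transport}. For classical solutions with values in $\mathcal U_m$, the map $(u,v)\mapsto(r,\xi)$ is a smooth diffeomorphism onto $(0,\infty)^2$ with smooth inverse $u=\sqrt{r\xi}$, $v=\sqrt{r/\xi}$. Therefore existence and uniqueness of a classical solution of \eqref{eq: main_system_viscos_2} are equivalent to the same statement for the triangular system in $(\theta^\eps,\xi^\eps)$. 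I would thus solve this triangular system globally in two stages.

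\textbf{Step 1 (the $\theta$-equation).} Equation \eqref{theta} is a uniformly parabolic scalar equation $\theta_t + g(\theta)\theta_x = 2\eps\theta_{xx}$ with smooth $g(\theta)=\phi(\theta^2)+2\theta^2\phi'(\theta^2)$. The initial datum $\theta_0^\eps=\sqrt{r_0^\eps}\in[m,M]$ has bounded derivatives up to order three.
\begin{enumerate}[(a)]
\item Obtain local existence by a Duhamel/heat-kernel fixed point in the space $C_b^{2+\alpha}$ (or in $W^{3,\infty}$), after truncating $g$ outside $[m/2,2M]$.
\item The maximum principle, exactly as in Lemma \ref{invariant_region}, gives $m\le\theta^\eps\le M$, so the truncation is never active.
\item To exclude blow-up, derive a priori bounds on derivatives. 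Differentiating, $q=\theta_x$ satisfies $q_t+(g(\theta)q)_x=2\eps q_{xx}$. Heat-kernel estimates with the $L^\infty$ bound give $\|\theta_x(t)\|_\infty\le C(\eps,T)$; iterating gives $\theta_{xx}$, $\theta_{xxx}$, since each derivative equation is linear in the top derivative with coefficients controlled by lower ones. Gronwall then yields bounds up to any $T$.
\end{enumerate}
Continuation therefore gives a global smooth $\theta^\eps$ on $\Omega_T$, unique by the comparison principle applied to the difference of two solutions.

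\textbf{Step 2 (the $\xi$-equation).} With $r^\eps=(\theta^\eps)^2\ge m^2$ known, the coefficient $b^\epsilon=\phi(r^\eps)-2\eps\theta^\eps_x/\theta^\eps$ is $C^2$ in $x$ and globally Lipschitz in $x$ on $[0,T]$, with bounds from Step 1. The characteristics $\dot X=b^\epsilon(X,t)$ therefore exist globally and form a $C^2$ flow of diffeomorphisms. Setting $\xi^\eps(x,t)=\xi_0^\eps(X^{-1}(x,t))$ gives the unique classical solution, and the differentiated flow controls $\xi_x,\xi_{xx}$ since $\xi^\eps_0$ has bounded derivatives to order three. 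The bound \eqref{eq: xi_bounds} follows as before.

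Recovering $(u^\eps,v^\eps)$ by the inverse transformation then gives a classical solution in $\mathcal U_m$, indeed in $[m,M]^2$. It is unique because any classical solution produces $(r,\xi)$ solving the triangular system.

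\textbf{Step 3 (entropy admissibility).} Symmetry and semidefiniteness of $\nabla^2\mathcal E_{k,p}\mathbf B$ follow from \eqref{computeresult}: that quadratic form equals $\frac{k(2k+1)}{r^{k+2}}r_x^2\ge0$, and a direct computation of the product matrix shows it is symmetric. For the identity \eqref{entrop_h_p_general}, integrate \eqref{entropy_main_identity} against cutoffs $\chi_R(x)$. The boundary flux terms need justification, and I expect this to be the main obstacle, since $\U^\eps$ is merely bounded rather than decaying.
\begin{itemize}
\item I would first derive the weak inequality \eqref{definition_entropy_viscous} for compactly supported $\varphi$, which only uses the pointwise identity and is immediate.
\item For the integrated identity with $\mathcal E_{k,p}(\U_0^\eps)\in L^1$, I would use the local energy estimate: the term $\int\!\!\int \mathcal Q_{k,p}\chi_R'$ and the viscous flux term are bounded by $C/R$ times local integrals of the entropy. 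Here I would use $\mathcal E_{k,p}\ge0$ comparable to $|\U-(m,m)|$ near the corner, together with the bounded derivatives, and then let $R\to\infty$ by monotone convergence.
\end{itemize}
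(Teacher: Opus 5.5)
Your Steps 1–2 are the paper's argument, carried out in more detail: the decoupled system in the Riemann invariants, scalar parabolic theory plus the maximum principle for $\theta^\epsilon$, characteristics for $\xi^\epsilon$, and inversion of $(u,v)\mapsto(r,\xi)$. The first bullet of Step 3 is also correct. In fact $\nabla^2_{\mathbf U}\mathcal E_{k,p}\,\mathbf B=k(2k+1)r^{-k-2}(v,u)^\top(v,u)$, because $(v,-u)\mathbf B=0$ annihilates the rank-one Hessian of $\sqrt r\,\xi^p$. The local inequality \eqref{definition_entropy_viscous} is all that Definition \ref{admissible_soln_viscous} asks for.

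The gap is the second bullet, i.e. the global identity \eqref{entrop_h_p_general}, which the paper merely calls straightforward.

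\emph{Your positivity claim is false.} You assume $\mathcal E_{k,p}\ge 0$ and comparable to $|\mathbf U-(m,m)|$ near the corner. But on $\{u=m<v\}$ one has $(mv)^{-k}<m^{-2k}$ and $m^{p+1/2}v^{1/2-p}\le m$, so $\mathcal E_{k,p}<0$ there; indeed $\partial_v\mathcal E_{k,p}(m,m)=-km^{-2k-1}+\tfrac12-p<0$. Monotone convergence is therefore unavailable, and $\mathcal Q_{k,p}$ is not controlled by $\mathcal E_{k,p}$ either.

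\emph{The identity can actually fail under the stated hypothesis.} $\mathcal E_{k,p}(\mathbf U_0^\epsilon)\in L^1$ only forces the data to approach the level curve $\{\mathcal E_{k,p}=0\}$ at $\pm\infty$, not a single state, and $\mathcal Q_{k,p}$ is not constant on that curve. For example, take $\phi(s)=s$ and $m=k=p=1$. The curve leaves $(1,1)$ into the interior of $\mathcal U_1$ in direction $(3,1)$, and along it $\mathcal Q_{1,1}=2r-1-3\ln r$. Let $\mathbf U_0^\epsilon$ equal $(1,1)$ for $x\ll 0$ and another point $\mathbf U_+$ of the curve for $x\gg0$. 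Then $\mathcal E_{1,1}(\mathbf U_0^\epsilon)$ has compact support. The solution keeps these far-field states; $\xi^\epsilon$ is even exactly constant far out, by finite transport speed. Integrating \eqref{entropy_main_identity} over $(-R,R)\times(0,t)$ and letting $R\to\infty$ leaves the extra term $t\,\bigl(\mathcal Q_{1,1}(1,1)-\mathcal Q_{1,1}(\mathbf U_+)\bigr)\neq 0$, so \eqref{entrop_h_p_general} fails.

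\emph{What is needed.} You need an additional decay hypothesis, namely the ``sufficient decay'' of Lemma \ref{entropy_parabolic}, e.g. $\mathbf U_0^\epsilon-\bar{\mathbf U}$ integrable for one far-field state $\bar{\mathbf U}$ at both ends. The cutoff argument must then rest on
\[
|\mathcal E_{k,p}(\mathbf U)-\mathcal E_{k,p}(\bar{\mathbf U})|+|\mathcal Q_{k,p}(\mathbf U)-\mathcal Q_{k,p}(\bar{\mathbf U})|\le C|\mathbf U-\bar{\mathbf U}|,
\]
together with propagated integrability of $\mathbf U^\epsilon-\bar{\mathbf U}$ and of $r^\epsilon_x$. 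Bounded derivatives alone give only an $O(1)$ bound on the viscous cutoff term, not the $O(1/R)$ decay you need.
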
 

\begin{proof}
We note that classical solutions of the transformed Cauchy problem \eqref{eq: r_equation}, \eqref{eq: xi_equation}  induce  
classical solutions for the  Cauchy problem for \eqref{eq: main_system_viscos_2} and vice 
versa since the mapping $(u,v) \mapsto (r,\xi)$ is 
bijective and smooth on $[m,M]^2$.\\
We consider the Cauchy problem for  \eqref{eq: r_equation}, \eqref{eq: xi_equation} and observe first  that \eqref{eq: r_equation} and \eqref{theta}
are equivalent. The   Cauchy problem for \eqref{theta} is uniquely solvable for $\theta^\eps = \sqrt{r^\eps}$ and from the proof of Lemma \ref{invariant_region} we get thus by the solution theory for scalar parabolic equations and the parabolic maximum principle the wellposedness of classical solutions $r^\eps $ of \eqref{eq: r_equation} with $r^\eps(x,t) \in [m^2,M^2]$ for all $(x,t) \in \Omega_T$. Moreover, the bounds on the initial datum $r_0^\eps$ imply that the derivatives of $r^\eps(\cdot,t) $  are bounded up to third order in $\bar\Omega_T$.\\
We turn to \eqref{eq: xi_equation}, which can be written as a linear, first-order transport equation, see \eqref{transport}. The transport coefficient $b^\eps$ in \eqref{transport} is smooth and bounded in $\bar \Omega_T$ because it depends only on the 
quantities $r^\eps$ and $r^\eps_x$. Therefore, the method of characteristics 
implies the existence of a unique classical solution 
of the Cauchy problem for \eqref{transport}
which satisfy the bound \eqref{eq: xi_bounds}.
Higher-order regularity in space follows from the regularity of $r^\eps$ and the bounds on the derivatives of  $r^\eps(\cdot,t) $.\\
Altogether, we have verified the well-posedness of classical solutions to the Cauchy problem for \eqref{eq: main_system_viscos_2} in $ [m, M]^2$. It is straightforward to check that \eqref{entrop_h_p_general} holds.
\end{proof}
\section{The vanishing diffusion limit for the system \eqref{eq: main_system_viscos_2}}\label{sec: vanishing} 
In this section, we pass to the limit $\epsilon \rightarrow 0$ in the system \eqref{eq: main_system_viscos_2} and to prove the existence of global weak entropy solutions of the Cauchy problem for the hyperbolic system \eqref{eq: main_system}. Precisely, for the system \eqref{eq: main_system} with initial data \eqref{initial_data}, we have the following global existence result as the main result of this article.
\begin{theorem}[Main result]\label{main_theorem_1}
Let $M > m>0 $ be constants such that the initial datum 
$\U_0$ satisfies
\begin{align}\label{initial_invariant}
(u_0, v_0)\in [m, M]^2 \subset {\mathcal U}_m. 
\end{align}
Furthermore, we suppose that we have ${\mathcal E}_{k,p}(\mathbf{U}_0) \in L^1(\mathbb{R})$.
\\
Then the following statements hold true for the sequence ${\{ {\U^\eps}\}}_{\eps >0}$ of  classical solutions  of the Cauchy problem for \eqref{eq: main_system_viscos_2} (see Lemma \ref{global_visocus}).
\begin{enumerate}
\item Under Assumption \ref{assumption},
there exists a subsequence of ${\{r^{\epsilon}=u^\epsilon v^\epsilon\}}_{\epsilon>0}$ (still labelled ${\{r^{\epsilon}\}}_{\epsilon>0}$) and 
a function $ r\in L^\infty(\Omega_T)$, such that $r^\epsilon \rightarrow  r$ holds a.e.\ for  $\epsilon\rightarrow 0$.
\item Assume in addition that there is is a constant $\Xi>0$  such that the ratio  $\xi_0 = {u_0}/{v_0}$ satisfies
\[
\displaystyle\int_{\mathbb{R}} \left| \xi_{0,x}\right|\, dx\leq \Xi.
\]
Then, there exists a subsequence of
${\{\U^\eps\}}_{\eps >0}$ (still denoted
by ${\{\U^\epsilon\}}_{\eps>0}$) and a function $\U = (u,v)^T\in L^\infty(\Omega_T,{\mathcal U}_m)$, such that $\U^\eps \to \U$ pointwise a.e.\ and 
such that we have $r =  u v$ a.e.\\
The limit $\U$ is a weak entropy solution of the Cauchy problem for the hyperbolic system \eqref{eq: main_system} in the sense of Definition \ref{entropy_soln}. 
\end{enumerate}
\end{theorem}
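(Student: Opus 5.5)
The argument separates into the two decoupled equations \eqref{eq: r_equation} and \eqref{eq: xi_equation} for the Riemann invariants. Part 1 uses scalar compensated compactness for $\theta^\eps=\sqrt{r^\eps}$. Part 2 uses a uniform $BV$ bound for $\xi^\eps$ coming from the transport structure.

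\emph{Step 1 (part 1).} By Lemma \ref{invariant_region}, $m\le\theta^\eps\le M$ uniformly in $\eps$. The function $\theta^\eps$ solves the scalar viscous conservation law
\[
\theta^\eps_t+g(\theta^\eps)_x=2\eps\theta^\eps_{xx}, \qquad g'(\theta)=\phi(\theta^2)+2\theta^2\phi'(\theta^2).
\]
Its flux satisfies $g''(\theta)=\theta\,(6\phi'(\theta^2)+4\theta^2\phi''(\theta^2))$. By Assumption \ref{assumption}, $g''$ vanishes only on a null set of $\theta$, so $g$ is not affine on any interval.

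For a convex $C^2$ entropy $\eta(\theta)$ with flux $q$, one has
\[
\eta(\theta^\eps)_t+q(\theta^\eps)_x=2\eps\eta(\theta^\eps)_{xx}-2\eps\eta''(\theta^\eps)(\theta^\eps_x)^2.
\]
Testing with $\eta=\theta^2/2$ on a cutoff gives $\eps\iint_K(\theta^\eps_x)^2\le C_K$; the entropy identity \eqref{entrop_h_p_general} with $k>0$ gives the same bound globally. Consequently, $\sqrt\eps\,\theta^\eps_x$ is bounded in $L^2_{loc}$, so $2\eps\eta_{xx}$ is compact in $H^{-1}_{loc}$. The remaining term is bounded in $L^1_{loc}$, hence compact in $W^{-1,q}_{loc}$ for $q<2$. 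The total is bounded in $W^{-1,\infty}$. Murat's lemma then yields $H^{-1}_{loc}$-compactness of $\eta(\theta^\eps)_t+q(\theta^\eps)_x$.

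Tartar's div--curl argument, applied with the Young measure $\nu_{x,t}$, gives $\nu$ supported on an interval on which $g$ is affine. Since no such nondegenerate interval exists, $\nu$ is a Dirac mass. Hence $\theta^\eps\to\theta$ strongly in $L^p_{loc}$ and a.e. along a subsequence, and $r=\theta^2$.

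\emph{Step 2 (BV bound for $\xi^\eps$).} Differentiating \eqref{transport} in $x$ gives $(\xi^\eps_x)_t+(b^\eps\xi^\eps_x)_x=0$. This conservation form yields $\int|\xi^\eps_x(\cdot,t)|\,dx\le\int|\xi^\eps_{0,x}|\,dx$, which can be justified by smoothly approximating $|\cdot|$, or by noting that the transport flow preserves the total variation of $\xi^\eps$ along characteristics. Mollification preserves the bound, so $\int|\xi^\eps_{0,x}|\le\Xi$. Thus $\xi^\eps(\cdot,t)$ is uniformly bounded in $BV$.

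For time regularity, write $\xi^\eps_t=-b^\eps\xi^\eps_x$ with $b^\eps=\phi(r^\eps)-\eps r^\eps_x/r^\eps$. The first part of $b^\eps$ is bounded. The second part is $\eps r^\eps_x\xi^\eps_x/r^\eps$, and controlling it in $L^1_{loc}$ is the main obstacle, because $\xi^\eps_x$ is only a measure-like quantity while $\eps r^\eps_x$ is only small in $L^2$.

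I would first try to show that $\xi^\eps$ is equicontinuous in time in $L^1_{loc}$ with a negative-norm estimate. Write $\xi^\eps_t=-(b^\eps\xi^\eps)_x+b^\eps_x\xi^\eps$ and note $\eps b^\eps_x=O(\eps r^\eps_{xx})+O(\eps(r^\eps_x)^2)$, so the time derivative is bounded in $L^1(0,T;W^{-1,1}_{loc})$ plus terms vanishing as $\eps\to0$. The $\eps(r^\eps_x)^2$ term is controlled by the entropy dissipation in \eqref{entrop_h_p_general}. The $\eps r^\eps_{xx}$ term needs a local parabolic estimate for $\theta^\eps$, namely $\eps^{3/2}\|\theta^\eps_{xx}\|_{L^2_{loc}}$ bounded, obtained by differentiating \eqref{theta}. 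If that fails, I would fall back on the bound $\int|\xi^\eps_x|\le\Xi$ together with the Lipschitz-in-time estimate in $W^{-1,1}$ from the conservative form, and apply an Aubin--Lions/Simon compactness argument with $BV_{loc}\subset\subset L^1_{loc}\subset W^{-1,1}_{loc}$. Either way, $\xi^\eps\to\xi$ a.e. along a subsequence.

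\emph{Step 3 (conclusion).} Set $u=\sqrt{r\xi}$ and $v=\sqrt{r/\xi}$. Then $\U^\eps\to\U$ a.e., $\U$ takes values in $[m,M]^2$, and $r=uv$. Pass to the limit in the weak formulation of \eqref{eq: main_system_viscos_2} and in \eqref{definition_entropy_viscous}, using dominated convergence and $\U^\eps_0\to\U_0$ in $L^1_{loc}$. The viscous right-hand side is $\eps\iint\nabla\mathcal E\,\mathbf B\,\U^\eps_x\varphi_x$, and $\mathbf B\U^\eps_x$ has components $r^\eps_x/v^\eps$ and $r^\eps_x/u^\eps$. Hence this term is bounded by $C\sqrt\eps\,\|\sqrt\eps\,r^\eps_x\|_{L^2}\to0$. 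This gives Definitions \ref{weak_soln} and \ref{entropy_soln}.
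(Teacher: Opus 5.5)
Your Step 1 and Step 3 follow the paper's argument: scalar compensated compactness for $\theta^\epsilon=\sqrt{r^\epsilon}$, then passing to the limit with $\epsilon\iint (r^\epsilon_x)^2\le C$. The gap is the time compactness of $\xi^\epsilon$ in Step 2. You correctly identify it as the obstacle, but neither of your routes closes it.

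\emph{Negative-norm route.} If you split the full $b^\epsilon$ as $-(b^\epsilon\xi^\epsilon)_x+b^\epsilon_x\xi^\epsilon$, you produce $\phi'(r^\epsilon)r^\epsilon_x\xi^\epsilon$. This term is not bounded in $L^1_{loc}$, because for $L^\infty$ data nothing controls the variation of $r^\epsilon$; you only know $\sqrt\epsilon\, r^\epsilon_x\in L^2$. Suppose instead you split only the $\epsilon$-part. Then you need $\epsilon r^\epsilon_{xx}$ bounded in $L^1_{loc}$. The estimate $\epsilon^{3/2}\|\theta^\epsilon_{xx}\|_{L^2_{loc}}\le C$ only gives $\|\epsilon\theta^\epsilon_{xx}\|_{L^1_{loc}}\lesssim \epsilon^{-1/2}$. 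Integrating by parts instead just brings back the bad term $\epsilon r^\epsilon_x\xi^\epsilon_x$.

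\emph{Fallback route.} This one is circular. The identity $(\xi^\epsilon_x)_t+(b^\epsilon\xi^\epsilon_x)_x=0$ only restates $\xi^\epsilon_t=-b^\epsilon\xi^\epsilon_x$. So a $W^{-1,1}$ bound on $\xi^\epsilon_t$ again requires control of $\epsilon r^\epsilon_x\xi^\epsilon_x$, which is exactly the missing estimate.

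The missing ingredient, which the paper uses, is that $\epsilon r^\epsilon_x$ is uniformly bounded in $L^\infty(\Omega_T)$, not merely $O(\sqrt\epsilon)$ in $L^2$. This follows from parabolic scaling:
\begin{itemize}
\item $\tilde\theta(y,s)=\theta^\epsilon(\epsilon y,\epsilon s)$ solves $\tilde\theta_s+g(\tilde\theta)_y=2\tilde\theta_{yy}$ with values in $[m,M]$.
\item Its initial gradient $\epsilon\theta^\epsilon_{0,x}$ is bounded, because the data are mollified at scale $\epsilon$.
\item A Duhamel/heat-kernel estimate on unit time windows bounds $\tilde\theta_y$ uniformly in $s$.
\end{itemize}
Hence $|b^\epsilon|\le C$, so $\|\xi^\epsilon_t(\cdot,t)\|_{L^1}\le C\|\xi^\epsilon_x(\cdot,t)\|_{L^1}$. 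Then $\{\xi^\epsilon\}$ is bounded in $BV_{loc}(\Omega_T)$, and Helly's theorem gives a.e.\ convergence.

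Alternatively, you could put the time regularity on the conserved variables. By the divergence form of the viscous system and the dissipation bound, $u^\epsilon_t$ and $v^\epsilon_t$ are bounded in $L^2(0,T;H^{-1}_{loc})$. Spatial equicontinuity of $u^\epsilon=\sqrt{r^\epsilon\xi^\epsilon}$ in $L^1_{loc}(\Omega_T)$ follows from the strong convergence of $r^\epsilon$ together with the $BV$ bound on $\xi^\epsilon$. A Kruzhkov-type lemma then yields compactness of $u^\epsilon$, and likewise of $v^\epsilon$.

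A minor point: $\xi^\epsilon_0=(j^\epsilon*u_0)/(j^\epsilon*v_0)$ is not the mollification of $\xi_0$. You therefore obtain $\int|\xi^\epsilon_{0,x}|\,dx\le C(m,M)\,\Xi$ rather than $\Xi$, which still suffices.
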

\begin{proof}
In what follows, we prove that $\mathbf{U}^\eps\rightarrow \mathbf{U}$ strongly in $L^p_{loc}(\Omega_T), \, \, p\in [1, \infty)$. To this end, we utilize the Riemann invariant structure in the equivalent system  \eqref{eq: r_equation}, \eqref{eq: xi_equation}. In particular, using the transformation $\theta^\eps=\sqrt{r^\eps}$, it is easy to see that $\theta^\eps$ satisfies a viscous scalar balance law with a flux function satisfying Assumption \ref{assumption}, and thus using the classical compensated compactness theory of scalar viscous conservation laws (see Theorem 3.1.3 in \cite{lu2002hyperbolic}),  there is a function $\theta \in L^\infty(\Omega_T;[m,M] )$ and a subsequence still denoted as ${\{\theta^\eps\}}_{\eps >0}$ such that $\theta^\eps\rightarrow \theta$ a.e.\ in $\Omega_T$.
Hence, by the dominated convergence theorem, one can conclude that
\[
\theta^\epsilon\rightarrow \theta \quad \mathrm{as}~ \e\rightarrow 0~\quad \mathrm{in}\,\, L^p_{\mathrm{loc}}(\Omega_T),~p\in [1, \infty).
\]
Using the inverse transformation $r^\eps=(\theta^\eps)^2$, we then have the convergence of the sequence $r^\eps\rightarrow \theta^2=:r$ in $L^p_{loc}(\Omega_T)$.\\
This proves the first assertion of the theorem. 

Now, in order to conclude the convergence of $\mathbf{U}^\eps \rightarrow \mathbf{U}$, we need the convergence of $\xi^\eps\rightarrow \xi$, where $\xi^\eps$ satisfies \eqref{eq: xi_equation}. 

For this, we differentiate \eqref{eq: xi_equation} with respect to $x$ to obtain
\[
{(\xi_x^\eps)}_t + {(b^\epsilon \xi_x^\epsilon)}_x =0.
\]
Now, for $\delta >0$, let $\eta_\delta(\xi_x):=\sqrt{\xi_x^2+\delta^2}$. Multiplying the above equation by
$\eta_\delta'(\xi_x^\epsilon)$ and using the chain rule, one obtains
\[
\partial_t \eta_\delta(\xi_x^\epsilon)
+
\partial_x\bigl(b^\epsilon \eta_\delta(\xi_x^\epsilon)\bigr)
=
(b^\epsilon)_x\Bigl(\eta_\delta(\xi_x^\epsilon)-\xi_x^\epsilon\eta_\delta'(\xi_x^\epsilon)\Bigr).
\]
Since
\[
\eta_\delta(\xi_x)-\xi_x\eta_\delta'(\xi_x)
=
\frac{\delta^2}{\sqrt{\xi_x^2+\delta^2}}
\to 0
\qquad\text{as }\delta\rightarrow 0,
\]
we may pass formally to the limit and obtain
\[
\partial_t |\xi_x^\epsilon| + \partial_x(b^\epsilon |\xi_x^\epsilon|)=0
\]
in the sense of distributions. Using an appropriate sequence of test functions, we deduce for all $t\in [0,T]$ the relation \[
\int_{\mathbb R}|\xi_x^\epsilon(x,t)|\,dx = \int_{\mathbb R}|\xi_{x,0}(x)|\,dx = \Xi.
\]
Then, for all $t\in[0,T]$, the family ${\{\xi^\epsilon_x(\cdot,t)\}}_{\epsilon>0}$ is
uniformly bounded in $L^1(\mathbb R)$. Moreover, note that $r^\eps$ satisfies  the uniformly parabolic equation  \eqref{eq: r_equation} which leads to the  uniform bound  ${\lVert\epsilon r_x\rVert}_{L^\infty(\mathbb R)}<C$ for some uniform constant $C>0$. This implies together with $r^\eps \in [m,M]$, that the transport term \[
b^\eps = \phi(r^\epsilon)-\epsilon \frac{r_x^\epsilon}{r^\epsilon}\]
is uniformly bounded in $L^\infty(\Omega_T)$. Thus, using the evolution equation \eqref{eq: xi_equation}, we observe that
the sequence ${\{\xi^\epsilon_t(\cdot,t)\}}_{\epsilon>0}$ of time derivatives  remains uniformly bounded in $L^1(\Omega)$ for all $t\in [0, T]$. Further, in view of Lemma \ref{invariant_region}, the sequence  ${\{\xi^\epsilon(\cdot,t)\}}_{\epsilon>0}$ is uniformly bounded. Therefore, by Helly's selection theorem (see \cite{BV_compactness_book} for more details), one may extract a subsequence and find a $\xi \in L^\infty(\Omega_T)$, such that
\[
\xi^\epsilon\to \xi
\qquad\text{in }L^1_{\mathrm{loc}}(\Omega_T) \text{ as } \eps \to 0
\]
and almost everywhere in $\Omega_T$.\\
Due to the strong convergence of the sequence $(r^\epsilon, \xi^\epsilon)\to (r, \xi)$ and the (invertible) relation  $(u^\epsilon, v^\epsilon)=\bigg(\sqrt{r^\epsilon \xi^\epsilon}, \sqrt{\dfrac{r^\epsilon}{\xi^\epsilon}}\bigg)$
on the compact set
\[
[m^2,M^2]\times \left[\frac mM,\frac Mm\right],
\]
we can identify 
a function $\U= (u, v)^\top:= \bigg(\sqrt{r \xi}, \sqrt{\dfrac{r}{\xi}}\bigg)^\top$ in $L^\infty(\Omega_T)$ with 
\begin{equation}\label{stronglimit}
\U^\eps \to \U \text{ a.e.\ in } \Omega_T \text{ as } \eps \to 0.
\end{equation}

We now proceed to prove that the limit $\U$ of the sequence of the approximate solutions $\U^\eps = (u^\eps, v^\eps)^\top$ is a weak entropy solution of the hyperbolic system \eqref{eq: main_system}. Consider the sequence of smooth solutions $(u^\eps, v^\eps)$ and an arbitrary  vector-valued test function $\bm{\varphi}\in C_0^1(\Omega_T; \mathbb{R}^2)$. Then, by multiplying the system \eqref{eq: main_system_viscos_2} with $\bm{\varphi}$ and integrating over $\Omega_T$, we obtain
\begin{equation}\label{weak_form_smooth}
    \displaystyle\iint_{\Omega_T} 
    \Big( \mathbf{U}^\eps\cdot {\bm \varphi}_t
          + \mathbf{F}(\mathbf{U}^\eps)\cdot {\bm \varphi}_x
         \Big)\,dx\,dt
    + \displaystyle\int_{\mathbb{R}} \mathbf{U}_0^\eps\cdot {\bm \varphi}(\cdot, 0)\,dx = \epsilon \displaystyle\iint_{\Omega_T}  \left(\dfrac{r_x^\eps}{v^\eps}, \dfrac{r_x^\eps}{u^\eps}\right)\cdot \bm{\varphi}_x\, dx\, dt,
\end{equation}
From the strong limit $\mathbf{U}^\eps\rightarrow \mathbf{U}$ in \eqref{stronglimit}, we deduce  $\mathbf{F(U^\eps)}\rightarrow \mathbf{F(U)}$ a.e.\ in $\Omega_T$. Moreover, in view of the entropy identity \eqref{entrop_h_p_general}, there exists a constant $\tilde C>0$, independent of $\epsilon$, such that
\[
\epsilon \iint_{\Omega_T} |r_x^\eps|^2\,dx\,dt \le \tilde  C.
\]
It follows that
\[
\|\epsilon r_x^\eps\|_{L^2(\Omega_T)}^2
=
\epsilon^2\|r_x^\eps\|_{L^2(\Omega_T)}^2
\le \tilde C\epsilon \to 0.
\]
Hence, using the bounds of $(u^\eps, v^\eps)$ from lemma \eqref{invariant_region}, we obtain
\[
\epsilon \displaystyle\iint_{\Omega_T}  \left(\dfrac{r_x^\eps}{v^\eps}, \dfrac{r_x^\eps}{u^\eps}\right)\cdot \bm{\varphi}_x\, dx\, dt \to 0
\qquad\text{as }\epsilon\to0.
\]
Passing to the limit in \eqref{weak_form_smooth} and using the strong convergence of $\mathbf U^\epsilon\to \mathbf U$ and $\mathbf{U}_0^\eps\rightarrow \mathbf{U}_0$, we conclude that the limit function  $\mathbf{U}=(u, v)^\top$ is a weak solution of the system \eqref{eq: main_system}.\\
Finally, we prove that the limit function $\mathbf{U}$ is actually a weak entropy solution. To this end, we utilize Definition \ref{definition_entropy_viscous}. Fix  a  non-negative test function $\varphi\in C_0^1(\Omega_T; \mathbb{R})$. The sequence of classical solutions $\mathbf{U}^\eps$ satisfies the integral identity 
\begin{equation}\label{definition_entropy_viscous_limit}
\begin{aligned}
   \displaystyle \iint_{\Omega_T} \Big(\mathcal{E}_{k, p}(\mathbf{U}^\eps)\, \varphi_t+\mathcal{Q}_{k, p}(\mathbf{U}^\eps)\,\varphi_x\Big) \, dx\, dt&+\displaystyle\int_{\mathbb{R}} \mathcal{E}_{k, p}(\mathbf{U}_0^\eps)\,\varphi(\cdot, 0)\, dx\\
   &\geq \epsilon \displaystyle \iint_{\Omega_T} \Big(\nabla \mathcal{E}_{k, p}(\mathbf{U}^\eps) \mathbf{B(U^\eps)} \mathbf{U}_x^\epsilon\Big)\, \varphi_x \, dx\,dt. 
\end{aligned}
\end{equation}
Now for the entropy $\mathcal{E}_{k,p}$ as defined in \eqref{entropy_main_convex}, we compute directly $\nabla \mathcal{E}_{k, p}(\mathbf{U}^\eps) \mathbf{B(U^\eps)} \mathbf{U}_x^\epsilon$. First, using
\[
\mathbf{B}(\mathbf{U}^\eps)\mathbf{U}_x^\eps
=
\begin{pmatrix}
u_x^\eps+\dfrac{u^\eps}{v^\eps}v_x^\eps\\[4pt]
\dfrac{v^\eps}{u^\eps}u_x^\eps+v_x^\eps
\end{pmatrix}
=
\begin{pmatrix}
\dfrac{r_x^\eps}{v^\eps}\\[4pt]
\dfrac{r_x^\eps}{u^\eps}
\end{pmatrix},
\]
we obtain
\[
\nabla\mathcal E_{k,p}(\mathbf U^\eps)\mathbf B(\mathbf U^\eps)\mathbf U_x^\eps
=
\left(\frac{\partial_{u^\eps} \mathcal E_{k,p}(\mathbf U^\eps)}{v^\eps}
+\frac{\partial_{v^\eps} \mathcal E_{k,p}(\mathbf U^\eps)}{u^\eps}\right)r_x^\eps.
\]
Since $\mathcal E_{k,p}(u,v)=r^{-k}+\sqrt r\,\xi^p$, a direct computation shows that
\[
\frac{\partial_{u^\eps} \mathcal E_{k,p}}{v^\eps}+\frac{\partial_{v^\eps} \mathcal E_{k,p}}{u^\eps}
=
-2k(u^\eps v^\eps)^{-k-1}+(u^\eps)^{p-1/2}(v^\eps)^{-p-1/2},
\]
and therefore
\[
\nabla \mathcal E_{k,p}(\mathbf U^\eps)\mathbf B(\mathbf U^\eps)\mathbf U_x^\eps
= \big(-2k(u^\eps v^\eps)^{-k-1}+(u^\eps)^{p-1/2}(v^\eps)^{-p-1/2}\big)\,r_x^\eps.
\]
By Lemma~\ref{invariant_region}, the sequences ${\{u^\eps\}}_{\eps >0},{\{v^\eps\}}_{\eps >0},{\{r^\eps\}}_{\eps >0},{\{\xi^\eps\}}_{\eps >0}$ remain uniformly bounded, hence ${\big\{\big(-2k(u^\eps v^\eps)^{-k-1}+u^{p-1/2}v^{-p-1/2}\big)\big\}}_{\eps >0}$ is uniformly bounded. Using the entropy identity \eqref{entrop_h_p_general} again, it follows that
\[
\epsilon \iint_{\Omega_T} \Big(\nabla \mathcal{E}_{k,p}(\mathbf{U}^\eps) \mathbf{B}(\mathbf{U}^\eps) \mathbf{U}_x^\epsilon\Big)\, \varphi_x \, dx\,dt \to 0
\qquad\text{as }\eps\to0.
\]
Passing to the limit in \eqref{definition_entropy_viscous_limit} and using the strong convergence of $\mathbf U^\epsilon\to \mathbf U$ and $\mathbf{U}_0^\eps\rightarrow \mathbf{U}_0$, we conclude that
\[
\iint_{\Omega_T} \Big(\mathcal{E}_{k,p}(\mathbf{U})\, \varphi_t+\mathcal{Q}_{k,p}(\mathbf{U})\,\varphi_x\Big)\, dx\, dt
+\int_{\mathbb{R}} \mathcal{E}_{k,p}(\mathbf{U}_0)\,\varphi(\cdot, 0)\, dx
\ge 0.
\]
Thus, $\mathbf U=(u, v)^\top$ is a weak entropy solution of \eqref{eq: main_system} in the sense of Definition \ref{entropy_soln}.
\end{proof}
\section{Conclusions and future outlook}\label{sec: conclusions}
We established the existence of weak entropy solutions for a class of hyperbolic 
Keyfitz--Kranzer systems by introducing a novel approximation specifically designed for the compactness framework. The construction of the approximation is motivated by lubrication models arising in thin-film flow models, where diffusion appears only for a single component, and the system therefore exhibits a partially diffusive structure. In our analysis, however, we incorporate an additional artificial diffusion term, which results in a second-order evolution that is compatible with entropy/entropy-flux pairs of the underlying first-order hyperbolic system. This enabled us to derive the necessary compactness estimates independent of the diffusion coefficient. 

It would be interesting to investigate whether our approach can be extended to more complicated hyperbolic systems consisting of three or more equations, for e.g. the systems developed in the recent works of Barthwal et al.\ on multi-layer thin film flows \cite{barthwal2025hyperbolic, barthwal2025existence, barthwal2026generalized}. Furthermore, the invariant-region structure and the analytical framework developed in this work suggest the possibility of constructing numerical schemes that preserve the intrinsic properties, such as entropy stability and positivity preservation of the underlying system at the discrete level, which will be the content of a forthcoming work.

\section*{Acknowledgements}
Financial support by the German Research Foundation (DFG), within the framework of priority research programme—SPP 2410 Hyperbolic Balance Laws in Fluid Mechanics: Complexity, Scales, Randomness (CoScaRa) is gratefully acknowledged. P.Ö. is additionally supported by the individual grant  OE 661/4-1(520756621).
\bibliographystyle{abbrv}
\bibliography{citation}
\end{document}